\theoremstyle{plain}
\newtheorem{thm}{Theorem}[section]
\newtheorem{cor}[thm]{Corollary}
\newtheorem{lem}[thm]{Lemma}
\newtheorem{prop}[thm]{Proposition}
\newtheorem{conj}[thm]{Conjecture}
\newtheorem{fact}[thm]{Fact}
\newtheorem{facts}[thm]{Facts}
\newtheorem{rem}[thm]{Remark}
\theoremstyle{definition}
\newtheorem{example}[thm]{Example}
\newtheorem{defn}[thm]{Definition}
\theoremstyle{plain} 
\newaliascnt{cor}{thm}
\newtheorem{cor}[cor]{Corollary}
\crefname{cor}{corollary}{corollaries}
\Crefname{cor}{Corollary}{Corollaries}
\newaliascnt{lem}{thm}
\newtheorem{lem}[lem]{Lemma}
\crefname{lem}{lemma}{lemmas}
\Crefname{lem}{Lemma}{Lemmas}
\newaliascnt{prop}{thm}
\newtheorem{prop}[prop]{Proposition}
\crefname{prop}{proposition}{propositions}
\Crefname{prop}{Proposition}{Propositions}
\newaliascnt{conj}{thm}
\crefname{conj}{conjecture}{conjectures}
\Crefname{conj}{Conjecture}{Conjectures}
\crefname{ques}{question}{questions}
\Crefname{ques}{Question}{Questions}
\newaliascnt{fact}{thm}
\crefname{fact}{fact}{facts}
\Crefname{fact}{Fact}{Facts}
\newaliascnt{facts}{thm}
\crefname{facts}{fact}{facts}
\Crefname{facts}{Fact}{Facts}
\newaliascnt{rem}{thm}
\newtheorem{rem}[rem]{Remark}
\crefname{rem}{remark}{remarks}
\Crefname{rem}{Remark}{Remarks}
\theoremstyle{definition} 
\newaliascnt{example}{thm}
\newtheorem{example}[example]{Example}
\crefname{example}{example}{examples}
\Crefname{example}{Example}{Examples}
\newaliascnt{defn}{thm}
\newtheorem{defn}[defn]{Definition}
\crefname{defn}{definition}{definitions}
\Crefname{defn}{Definition}{Definitions}
\crefname{clai}{claim}{claims}
\Crefname{clai}{Claim}{Claims}
\theoremstyle{definition} 
\crefname{Idef}{definition}{definitions}
\Crefname{Idef}{Definition}{Definitions}
\numberwithin{equation}{section}
\def\G{\mathcal{G}}
\def\d{\delta}
\def\k{\kappa}
\def\s{\sigma}
\def\d{\delta}
\def\k{\kappa}
\def\s{\sigma}
\def\RR{{\mathbb R}}
\def\d{{\mathrm{d}}}
\def\id{\mathrm{id}}
\def\Diff{\mathrm{Diff}}
\newcommand{\gc}{\gamma}
\setlist[itemize]{noitemsep, topsep=0pt}
\newcommand{\vast}{\bBigg@{2}}
\newcommand{\Vast}{\bBigg@{5}}
\newcommand{\RNum}[1]{\uppercase\expandafter{\romannumeral #1\relax}}
\title[ Magnetic Geodesics on Half Lie-Groups]{The Hopf--Rinow Theorem and Mañé's Critical Value\\ for Magnetic Geodesics on Half Lie-Groups}
\author{Levin Maier}
\address{Faculty of Mathematics and Computer Science,
	University of Heidelberg,
	Im Neuenheimer Field 205,
	69120 Heidelberg, Germany}
\email{lmaier@mathi.uni-heidelberg.de}
\author{Francesco Ruscelli}
\address{Faculty of Mathematics and Computer Science,
	University of Heidelberg,
	Im Neuenheimer Field 205,
	69120 Heidelberg, Germany}
\email{fruscelli@mathi.uni-heidelberg.de}
\keywords{}
\subjclass[2020]{}
\begin{document}

\renewcommand{\abstractname}{Abstract}
	\begin{abstract}
In this article, we investigate \emph{right-invariant magnetic systems} on half-Lie groups, which consist of a strong right-invariant Riemannian metric and a right-invariant closed two-form. The main examples are groups of \( H^s \) or \( C^k \) diffeomorphisms of compact manifolds. \\[0.3em]
In this setting, we define \emph{Mañé's critical value} on the universal cover for weakly exact right-invariant magnetic fields. First, we prove that the lift of the magnetic flow to the universal cover coincides with a Finsler geodesic flow for energies above this threshold. \\[0.3em]
Finally, we show that for energies above Mañé's critical value, the full Hopf–Rinow theorem holds for such magnetic systems, thereby generalizing the work of Contreras and Merry from closed finite-dimensional manifolds to this infinite-dimensional context. Our work extends the recent results of Bauer, Harms, and Michor from geodesic flows to magnetic geodesic flows.
	\end{abstract}
	
    \maketitle
	
	\section{Introduction}
\noindent\textbf{Half-Lie groups} are smooth manifolds and topological groups for which the right translations are smooth, while the left translations are merely continuous. Examples in which the left translations are only continuous exist only in infinite dimensions.

Their study is motivated by the fact that these equations arise naturally in the context of 
V.~Arnold's geometric formulation of mathematical hydrodynamics~\cite{Arnold66}. 
More precisely, Arnold showed that the Euler equations of hydrodynamics, 
which govern the motion of an incompressible and inviscid fluid in a fixed domain (with or without boundary), 
can be interpreted as the geodesic equations on the group of volume-preserving diffeomorphisms 
endowed with a right-invariant Riemannian metric. 
Since then, many partial differential equations arising in mathematical physics 
have been reinterpreted within a similar geometric framework~\cite{AK98, Khesin-Mis-Mod-inf-Newton, Vi08}. 

However, the group of volume-preserving diffeomorphisms is a tame Fréchet--Lie group, 
which leads to significant analytical difficulties. 
Following the approach of Ebin and Marsden~\cite{EM70}, 
one can instead work with \( C^k \)- or \( H^s \)-diffeomorphism groups, 
as first studied by Eells, Eliasson, and Palais~\cite{Eells66, Eliasson67, Palais68}, 
which constitute the main examples of half-Lie groups.

This insight has motivated a significant amount of work devoted to studying the theoretical properties of infinite-dimensional geometric spaces. Before going into detail, we would like to point out that several unexpected phenomena occur in this infinite-dimensional setting, where well-known results of finite-dimensional Riemannian geometry cease to hold. Notable examples include the non-existence of Christoffel symbols~\cite{BauerBruverisMichor2014} and the vanishing of the geodesic distance~\cite{BauerHarmsPreston2020, MichorMumford2005, JerrardMaor2019}, a phenomenon first observed by Eliashberg and Polterovich~\cite{EliashbergPolterovich1993}. In what follows, we focus on the Hopf–Rinow theorem and begin by briefly recalling it.\\
 
\noindent\textbf{The Hopf–Rinow theorem}~\cite{HR}, which is one of the central results in Riemannian geometry on finite-dimensional manifolds, states that the notions of \emph{geodesic completeness} and \emph{metric completeness} coincide. Moreover, it implies \emph{geodesic convexity}, that is, any two points can be connected by a geodesic. 

In infinite dimensions, however, the situation is far more delicate, as in general all these implications may fail; see~\cite{HopfrinowfalseAktkin, HopfrinowfalseEkeland, Atkin97}. Surprisingly, due to a recent result of Bauer, Harms, and Michor~\cite[Thm.~7.7]{Bauer_2025}, the full Hopf–Rinow theorem holds for half-Lie groups equipped with right invariant strong Riemannian metrics. \\
The main objective of this article is to extend this result to \emph{right-invariant magnetic systems} on half-Lie groups, a framework pioneered in finite dimensions by V.~Arnold~\cite{ar61}.\\ 

\paragraph{\textbf{Right-invariant magnetic systems on half-Lie groups.}} In his pioneering work~\cite{ar61}, V.~Arnold described the motion of a charged particle in a magnetic field on a finite-dimensional phase space in the language of modern dynamical systems. 
The same mathematical ideas also extend naturally---though with additional analytical challenges---to systems whose phase space is infinite-dimensional. \\
We now formulate this framework within the setting of half-Lie groups $G$ equipped with a strong $G$-right-invariant metric~$\mathcal{G}$ (for a precise definition of this notion, see \Cref{Def: strong right invariant Riemannian metrics}).\\
A closed two-form $\sigma \in \Omega^2(G)$ is called a \emph{magnetic field}, and the triple $(G, \mathcal{G}, \sigma)$ is referred to as a \emph{magnetic system}. 
In the following, we assume that $\sigma$ is $G$-right-invariant as well, and we then call $(G, \mathcal{G}, \sigma)$ a \emph{$G$-right-invariant magnetic system}.

This triple determines a unique skew-symmetric bundle endomorphism $Y \colon TG \to TG$, called the \emph{Lorentz force}, defined by
\begin{equation}\label{e:Lorentz}
    \mathcal{G}_x\!\left(Y_x u, v\right) = \sigma_x(u,v),
    \qquad \forall\, x \in G,\ \forall\, u,v \in T_xG.
\end{equation}
By \cite[Thm.~7.1]{Bauer_2025}, the pair $(G, \mathcal{G})$ is a Hilbert manifold equipped with a strong Riemannian metric. Combining \eqref{e:Lorentz} with the Riesz representation theorem shows that $Y$ exists and is unique. Moreover, $Y$ is $G$-right-\textit{equivariant}, since both $\mathcal{G}$ and $\sigma$ are $G$-right-invariant in \eqref{e:Lorentz}. \\
We call a smooth curve $\gamma : I\subseteq\mathbb{R} \to G$ a \emph{magnetic geodesic} of $(G, \mathcal{G}, \sigma)$ if it satisfies
\begin{equation}\label{e:mg}
    \nabla_{\dot{\gamma}} \dot{\gamma} = Y_{\gamma} \dot{\gamma},
\end{equation}
where $\nabla$ denotes the Levi–Civita connection of the metric $\mathcal{G}$.  From \eqref{e:mg}, we see that a magnetic geodesic with $\sigma = 0$ is simply a standard geodesic for the metric $\mathcal{G}$. \\
Since $Y$ is skew-symmetric, magnetic geodesics have constant kinetic energy $E(\gamma, \dot{\gamma}) := \tfrac{1}{2}\, \mathcal{G}_{\gamma}(\dot{\gamma}, \dot{\gamma})$
and hence constant speed $|\dot{\gamma}| := \sqrt{\mathcal{G}_{\gamma}(\dot{\gamma}, \dot{\gamma})}$ just like standard geodesics. 
\Cref{s:preliminaries} contains a thorough exposition of inifinite-dimensional magnetic systems.

However, unlike standard geodesics, magnetic geodesics cannot, in general, be reparametrized to have unit speed. This is because the left-hand side of \eqref{e:mg} scales quadratically with the speed, whereas the right-hand side scales only linearly. Therefore, one of the main points of interest is to understand the similarities and differences between standard and magnetic geodesics.\\
On finite-dimensional closed manifolds, an important role is played by \emph{Mañé's critical value's}~\cite{CIPP, Man}, which marks an energy threshold indicating significant dynamical and geometric transitions in the magnetic geodesic flow. Next, we extend this notion to our infinite-dimensional setting. \\

\paragraph{\textbf{Mañé's critical value on the universal cover.}}  
A two-form \(\sigma\) on \(G\) is called \textit{weakly exact} if \( \sigma|_{\pi_2(G)} = 0 \), that is if the pullback \( \tilde{\sigma} \) of \(\sigma\) to the universal cover $\hat{G}$ of $G$ is exact.
We denote by \(\hat{\G}\) the pullback of \(\G\) to \(\hat{G}\), which is a strong right-invariant Riemannian metric. 
Details of this construction are provided in \Cref{s: Mañé's critical value on the universal cover} for the convenience of the reader.  

\begin{defn}[=\Cref{Def: int maneuni half}]\label{Def: int maneuni half int}
Let \((G, \mathcal{G}, \sigma)\)  be a \( G \)-right-invariant magnetic system.  If $\sigma$ is weakly exact we define \emph{Mañé's critical value} of the universal cover of the magnetic system \((G, \mathcal{G}, \sigma)\) by
\begin{equation}\label{e:maneuni_half_int}
c(G, \mathcal{G}, \sigma) := \inf_{\mathrm{d}\hat{\alpha} = \hat{\sigma}} \frac{1}{2} \,\Vert \hat{\alpha} \Vert_\infty^2 := \inf_{\mathrm{d}\hat{\alpha} = \hat{\sigma}} \frac{1}{2} \sup_{\hat x\in \hat{G}}\,\vert \hat{\alpha}_{\hat x} \vert_{\hat{\G}}^2
\;\in [0,\infty],
\end{equation}
where the infimum is taken over all $\hat{G}$-right-invariant primitives $\hat{\alpha}$ of $\hat{\sigma}$ (and it is understood to be \( \infty \) should such a primitive not exist). Here, \( |\cdot |_   \mathcal{\hat{G}} \) If \( \sigma \) is not weakly exact, we set \( c(G, g, \sigma) = \infty \).
\end{defn}
\begin{rem}
Note that Mañé's critical value on the universal cover for the standard magnetic system \( (\mathbb{R}^2, g_{\mathrm{eucl}}, \omega_{\mathrm{std}}) \), where \( g_{\mathrm{eucl}} \) is the Euclidean metric and \( \omega_{\mathrm{std}} \) the standard symplectic form, is infinite, since the standard symplectic form \( \omega_{\mathrm{std}} \) does not admit an \( \mathbb{R}^2 \)-invariant primitive.
\end{rem}
\begin{rem}
    In contrast to the finite-dimensional case considered in~\cite{CFP10}, we require that the primitive of the magnetic field on the universal cover be \( \hat{G} \)-right-invariant. This assumption is most likely necessary for establishing a Hopf–Rinow-type theorem in this context. For instance, without right invariance of the metric, the result does not hold for geodesic flows, as \cite{HopfrinowfalseAktkin} shows.
\end{rem}
Before moving on, we point out that if $\sigma$ is weakly exact and admits an $\hat{G}$-right-invariant primitive $\hat{\alpha}$, then, due to the $\hat{G}$-right-invariance, we obtain the following upper bound:
\[
    c(G, \mathcal{G}, \sigma) \le \tfrac{1}{2}\, |\hat{\alpha}_e|_{\hat{\mathcal{G}}}^2.
\]
Hence, Mañé critical value $c(G, \mathcal{G}, \sigma)$ is finite in this case.  
In the next section, we illustrate \Cref{Def: int maneuni half int} by showing that Mañé's critical value of the universal cover marks an energy threshold indicating significant dynamical and geometric transitions in the magnetic geodesic flow.

\subsection{Main results.}
The central goal of this work is to extend the Hopf–Rinow theorem~\cite[Thm.~7.7]{Bauer_2025} to the setting of right-invariant magnetic systems on half-Lie groups.  
As a first step, we show that the Mañé critical value of the universal cover defines an energy threshold distinguishing different dynamical and geometric regimes of the magnetic geodesic flow, as stated in the following result and in \Cref{IThm: HopfRinow half lie group magnetic int}:
\begin{prop}\label{prop: conjugated to Finsler flow}
    Let $(G, \mathcal{G}, \sigma)$ be a $G$-right-invariant magnetic system with $\sigma$ weakly exact and assume its pullback $\hat{\sigma}$ to the universal cover $\hat{G}$ admits a $\hat{G}$-right-invariant primitive \( \hat{\alpha} \).
    
    Then, for every \( \k > c(G, \mathcal{G}, \sigma) \), we have
    \begin{equation*}\label{e:finsler_metric}
        \mathcal{F}^{\kappa} \colon T\hat{G} \to \mathbb{R}, 
        \qquad 
        \mathcal{F}^{\kappa}(\hat{x}, \hat{v}) := \sqrt{2k}\, |\hat{v}|_{\hat{x}} - \hat{\alpha}_{\hat{x}}(\hat{v}),
        \quad 
        \forall\, (\hat{x}, \hat{v}) \in T\hat{G},
    \end{equation*}
    defines a $\hat{G}$-right-invariant Finsler metric on \( \hat{G} \) satisfying the following properties:
    \begin{enumerate}
        \item There exist constants \( C_1, C_2 > 0 \) such that
        \[
            C_1\, \mathcal{F}^\kappa_{e}(\hat{v})
            \;\le\; \sqrt{\hat{\mathcal{G}}_{e}(\hat{v},\hat{v})}
            \;\le\; C_2\, \mathcal{F}^\kappa_{e}(v),
            \qquad 
            \forall\, \hat{v} \in T_e\hat{G} \setminus \{0\}.
        \]
       \item On the energy surface \( \hat{E}^{-1}(\kappa) := \Sigma_{\kappa} \subseteq T\hat{G} \), as defined in~\eqref{eq: energy surface}, 
the lift of the magnetic flow of \( (G, \mathcal{G}, \sigma) \) 
to the universal cover \( \hat{G} \) coincides with the Finsler geodesic flow of \( (\hat{G}, \mathcal{F}^{\kappa}) \).
    \end{enumerate}
\end{prop}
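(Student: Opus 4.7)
The plan is to choose a primitive with small sup-norm, verify the Finsler-metric properties and the norm equivalence by elementary estimates, and then establish the flow correspondence via a Maupertuis-type identification of the free-time magnetic action with the Finsler length.

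First, since $\kappa>c(G,\mathcal{G},\sigma)$, the infimum in \Cref{Def: int maneuni half int} allows me to assume without loss of generality that the $\hat G$-right-invariant primitive $\hat\alpha$ of $\hat\sigma$ in the statement satisfies $\|\hat\alpha\|_\infty<\sqrt{2\kappa}$. The Cauchy--Schwarz inequality combined with the right-invariance of $\hat\alpha$ yields $|\hat\alpha_{\hat x}(\hat v)|\le \|\hat\alpha\|_\infty\,|\hat v|_{\hat{\mathcal{G}}}$ for every $(\hat x,\hat v)\in T\hat G$, so the triangle inequality gives
\[
(\sqrt{2\kappa}-\|\hat\alpha\|_\infty)\,|\hat v|\;\le\;\mathcal{F}^\kappa(\hat x,\hat v)\;\le\;(\sqrt{2\kappa}+\|\hat\alpha\|_\infty)\,|\hat v|,
\]
which proves strict positivity off the zero section and, at the identity, the norm equivalence (1) with $C_1=(\sqrt{2\kappa}+\|\hat\alpha\|_\infty)^{-1}$ and $C_2=(\sqrt{2\kappa}-\|\hat\alpha\|_\infty)^{-1}$. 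Positive $1$-homogeneity in $\hat v$ is manifest, smoothness off the zero section is inherited from $|\cdot|_{\hat{\mathcal{G}}}$ and $\hat\alpha$, and strong convexity of the fibrewise unit ball is the standard Randers-type property guaranteed precisely by $\|\hat\alpha\|_\infty<\sqrt{2\kappa}$. Right-invariance of $\mathcal{F}^\kappa$ follows from the right-invariance of $\hat{\mathcal{G}}$ and $\hat\alpha$.

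For (2) I would apply the Maupertuis principle for magnetic systems. The Hamiltonian formulation in \Cref{s:preliminaries}, pulled back to the universal cover, shows that the lifted magnetic flow is the Euler--Lagrange flow of the magnetic Lagrangian $\hat L(\hat x,\hat v)=\tfrac12\hat{\mathcal{G}}_{\hat x}(\hat v,\hat v)-\hat\alpha_{\hat x}(\hat v)$; indeed the fibrewise shift $\tau_{\hat\alpha}(\hat x,\hat p)=(\hat x,\hat p+\hat\alpha_{\hat x})$ satisfies $\tau_{\hat\alpha}^*\mathrm{d}\lambda=\mathrm{d}\lambda+\pi^*\hat\sigma$, so it conjugates the twisted symplectic pair $(\omega_{\hat\sigma},\tfrac12|\hat p|^{*\,2})$ to the canonical pair $(\mathrm{d}\lambda,H_{\hat\alpha})$ whose Legendre dual is exactly $\hat L$. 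The decisive identity is that on the energy surface $\Sigma_\kappa=\{\tfrac12|\hat v|^2=\kappa\}$,
\[
\hat L(\hat x,\hat v)+\kappa\;=\;2\kappa-\hat\alpha_{\hat x}(\hat v)\;=\;\sqrt{2\kappa}\,|\hat v|-\hat\alpha_{\hat x}(\hat v)\;=\;\mathcal{F}^\kappa(\hat x,\hat v).
\]
Thus the free-time magnetic action $\int(\hat L+\kappa)\,\mathrm{d}t$ agrees, along curves of energy $\kappa$, with the reparametrization-invariant Finsler length $\int\mathcal{F}^\kappa\,\mathrm{d}t$. Critical points of the former with free endpoint times are magnetic geodesics of energy $\kappa$; critical points of the latter are Finsler geodesics of $\mathcal{F}^\kappa$; hence the two families coincide as unparametrized curves in $\hat G$. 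To upgrade this orbital equivalence to an equality of flows on $\Sigma_\kappa$ I would pass to the dual picture: the Finsler Hamiltonian $\tfrac12(\mathcal{F}^\kappa)^{*\,2}$ and the magnetic Hamiltonian $\tfrac12|\hat p|^{*\,2}$ agree (after the shift $\tau_{\hat\alpha}$) on the image of $\Sigma_\kappa$, and their Hamiltonian vector fields differ at most by a conformal factor which is constant along orbits and uniquely fixed by the parametrization inherited from the energy $E=\kappa$.

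The main obstacle is making this Maupertuis identification rigorous in the infinite-dimensional half-Lie setting. Concretely, I would need (i)~the Euler--Lagrange equation for $\hat L$ on the Hilbert manifold $\hat G$ to genuinely produce~\eqref{e:mg}, which relies on the existence of the Levi--Civita connection for strong right-invariant metrics from \cite[Thm.~7.1]{Bauer_2025} together with the Hamiltonian analysis of~\Cref{s:preliminaries}; (ii)~the dual Finsler norm $(\mathcal{F}^\kappa)^*$ to be smooth off the zero section, which the norm equivalence (1) and the estimate $\|\hat\alpha\|_\infty<\sqrt{2\kappa}$ provide via the implicit-function theorem applied to the Legendre transform of $\mathcal{F}^\kappa$; and (iii)~the reparametrization implicit in Maupertuis to be globally consistent on $\Sigma_\kappa$, which again uses the uniform bound $\|\hat\alpha\|_\infty<\sqrt{2\kappa}$ to ensure that the conformal factor stays bounded away from $0$ and $\infty$.
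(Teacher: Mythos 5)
Your proposal is correct and, for the most part, follows the same route as the paper. Part (1) is handled identically: boundedness of $\hat{\alpha}_{\hat e}$ plus right-invariance give the two-sided bound with constants built from $\sqrt{2\kappa}\pm\Vert\hat\alpha\Vert_\infty$, and strong convexity is the standard Randers-type property. You are in fact slightly more careful than the paper in one respect: you note explicitly that $\kappa>c(G,\mathcal{G},\sigma)$ lets you choose the primitive so that $\Vert\hat\alpha\Vert_\infty<\sqrt{2\kappa}$, which is exactly what is needed for $C_1>0$ and for strong convexity; the paper leaves this implicit. For part (2), the core of both arguments is the same Maupertuis identification: the lifted system is exact, its magnetic flow is the Euler--Lagrange flow of $\hat L$, energy-$\kappa$ magnetic geodesics are critical points of the free-time action $S_{\hat L+\kappa}$, and on constant-speed-$\sqrt{2\kappa}$ curves this action equals the Finsler length (your pointwise identity $\hat L+\kappa=\mathcal{F}^\kappa$ on $\Sigma_\kappa$ is the same fact the paper extracts from $a^2+b^2\ge 2ab$, and the reparametrization-invariance of $\mathbb{L}_{\mathcal{F}^\kappa}$ closes the gap between the two variational problems). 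The only genuine divergence is the final step of upgrading ``same extremal curves'' to ``same flow on $\Sigma_\kappa$'': the paper does this on the Lagrangian side by constructing the Chern connection of $(\hat G,\mathcal{F}^\kappa)$ (Appendix A) and invoking the characterization of Finsler geodesics as $\nabla^{\mathcal{F}}$-autoparallel curves, whereas you pass to the cotangent bundle via the fibrewise shift $\tau_{\hat\alpha}$ and observe that the shifted magnetic Hamiltonian and the dual Finsler Hamiltonian share the level set $\{\,|\hat p+\hat\alpha|=\sqrt{2\kappa}\,\}$, so their Hamiltonian vector fields are proportional there. Both are legitimate; your dual argument avoids the Chern-connection machinery but requires fibrewise smoothness of $(\mathcal{F}^\kappa)^*$ off the zero section (which, as you note, follows from the explicit Randers form and the bound $\Vert\hat\alpha\Vert_\infty<\sqrt{2\kappa}$), while the paper's route keeps everything on $T\hat G$ at the cost of the appendix computation.
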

\begin{rem}
  This result generalizes~\cite[Cor.~2]{CIPP} from closed finite-dimensional manifolds to the infinite-dimensional setting. 
However, we would like to point out that, in extending the original argument from~\cite{CIPP} to our setting, the following obstacles arise: 
it is not clear whether the Finsler metric constructed there is $\hat{G}$-right-invariant, 
nor whether it induces the same topology on $\hat{G}$ as $\hat{\G}$. 
Both of these properties are essential for the proof of \Cref{IThm: HopfRinow half lie group magnetic int}. 
It would be interesting to determine whether this construction can also be carried out in the infinite-dimensional setting. 
We conclude by noting that our proof relies on variational methods.
\end{rem}
This result will play a crucial role in establishing the main theoretical advance of this article, namely the Hopf–Rinow theorem for magnetic geodesics on half-Lie groups.  
To state this properly, we first fix some notation. For a magnetic system \((G, \mathcal{G}, \sigma)\) as in \Cref{prop: conjugated to Finsler flow}, we denote by \(\hat{d}_{\mathcal{F}^\kappa}\) the geodesic distance associated with the Finsler metric \(\mathcal{F}^\kappa\) defined there.  

Analogously to the definition of the Riemannian exponential map, one can define the \emph{magnetic exponential map} \(\exp^{\mathcal{G}, \sigma}\)  of the magnetic system \((G, \mathcal{G}, \sigma)\). For details and a precise definition, we refer to~\eqref{Magnetic exponential map}. 

We are now in a position to state the main theorem.
\begin{thm}[=\Cref{IThm: HopfRinow half lie group magnetic}]\label{IThm: HopfRinow half lie group magnetic int}
  Let $(G,\G,\s)$ be as in \Cref{prop: conjugated to Finsler flow}. Then the following holds true
\begin{enumerate}
    \item \label{it: main thm a int} For all energy levels \( \kappa > c(G,\G,\sigma) \), with \( c(G,\G,\sigma )\) as in~\eqref{e:maneuni_half}, the space \( (\hat G, \hat d_{\mathcal{F}^\kappa}) \) is a complete metric space, i.e., every \( \hat d_{\mathcal{F}^\kappa}\)-Cauchy sequence converges in \( \hat G \).
    
    \item \label{it: main thm b int} The magnetic exponential map \( \exp^{\G,\sigma}_e : T_e G \to G \) is defined on all of \( T_e G \).
    
    \item \label{it: main thm c int} The magnetic exponential map \( \exp^{\G,\sigma} : TG \to G \) is defined on all of \( TG \).
    
    \item \label{it: main thm d int} The magnetic system \( (G, \G, \sigma) \) is magnetically geodesically complete, i.e.\ every magnetic geodesic is maximally defined on all of \( \mathbb{R} \).
\end{enumerate}
Assume in addition that \( G \) is \( L^2 \)-regular and that for each \( x \in  G \), the sets
\[
A_{x} := \left\{ \xi \in L^2([0,1], T_e  G) : \mathrm{evol}(\xi) = x \right\} \subset L^2([0,1], T_e  G)
\]
are weakly closed, where $\operatorname{evol}(\xi)$ is defined in \Cref{def:regular half Lie group}. Then:
\begin{enumerate}
    \setcounter{enumi}{4} 
    \item \label{it: main thm e int} For all energy levels \( \kappa > c(G, \G, \sigma) \) and all \( x, y \in G \), there exists a magnetic geodesic of \( (G, \G, \sigma) \) of energy \( \kappa \) 
 connecting \( x \) and \( y \) that minimizes the action.
\end{enumerate}
In addition, the magnetic geodesic completeness statements in items~\ref{it: main thm b int}–\ref{it: main thm d int} also hold for the magnetic systems \( (G^\ell, \mathcal{G}, \sigma) \) for all \( \ell \ge 1 \) on the weak Riemannian manifolds \( (G^\ell, \mathcal{G}) \).  
Here, \( \mathcal{G} \) and \( \sigma \) denote the restrictions of the Riemannian metric and the magnetic two-form from \( G \) to \( G^\ell \), where \( G^\ell \) denotes the space of \( C^\ell \)-elements in \( G \) in the sense of \Cref{Def: ck elements}.
\end{thm}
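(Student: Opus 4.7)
The plan is to combine the Finsler correspondence of \Cref{prop: conjugated to Finsler flow} with the Riemannian Hopf--Rinow theorem of Bauer, Harms and Michor \cite[Thm.~7.7]{Bauer_2025}.

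For item (1), since $\mathcal{F}^\kappa$ and $\hat{\mathcal{G}}$ are both $\hat{G}$-right-invariant, the bi-Lipschitz estimate at the identity provided by \Cref{prop: conjugated to Finsler flow}(1) propagates by right-translation to every tangent space. Passing to length distances, $\hat{d}_{\mathcal{F}^\kappa}$ and $\hat{d}_{\hat{\mathcal{G}}}$ are globally bi-Lipschitz equivalent on $\hat{G}$. Applying \cite[Thm.~7.7]{Bauer_2025} to the strong right-invariant Riemannian half-Lie group $(\hat{G}, \hat{\mathcal{G}})$ yields completeness of $\hat{d}_{\hat{\mathcal{G}}}$, which transfers to $\hat{d}_{\mathcal{F}^\kappa}$.

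For items (2)--(4), a single direct argument works at all energies; it suffices to prove item (4), as items (2) and (3) then follow. Skew-symmetry of $Y$ forces constant speed $r := |\dot\gamma(0)|_{\gamma(0)}$ along any magnetic geodesic $\gamma$, so $\gamma$ is $r$-Lipschitz with respect to $d_\mathcal{G}$. If $\gamma \colon [0,T) \to G$ were maximal with $T < \infty$, the metric completeness of $(G, d_\mathcal{G})$ from \cite[Thm.~7.7]{Bauer_2025} would yield a limit $\gamma(t) \to x_\infty \in G$. To extend the trajectory in $TG$, I right-trivialize: setting $u(t) := (dR_{\gamma(t)^{-1}})_{\gamma(t)}\, \dot\gamma(t) \in T_eG$ gives a curve of constant norm $r$ (by right-invariance of $\mathcal{G}$) satisfying a smooth autonomous ODE $\dot u = F(u)$ on $T_eG$. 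Here $F$ is the sum of the Euler--Arnold bilinear operator associated with the Levi--Civita connection of $\mathcal{G}$ and the reduced linear Lorentz force, both bounded on bounded sets by right-equivariance and the strong metric hypothesis. Hence $u$ is Lipschitz on $[0,T)$ and extends continuously to some $u_\infty \in T_eG$; local existence for the magnetic geodesic ODE at $(x_\infty, (dR_{x_\infty})_e u_\infty)$ together with uniqueness extends $\gamma$ past $T$, contradicting maximality. The $C^\ell$-statement goes through verbatim, because the right-invariant reduction preserves the $C^\ell$-scale.

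For item (5), I lift $x,y \in G$ to $\hat{x}, \hat{y} \in \hat{G}$ and minimize the Finsler length of $\mathcal{F}^\kappa$ over curves from $\hat{x}$ to $\hat{y}$. The $L^2$-regularity hypothesis and the weak closedness of $A_x$ lift from $G$ to $\hat{G}$ through the covering map, and an adaptation of the variational existence argument of \cite[Thm.~7.7]{Bauer_2025} to the $\hat{G}$-right-invariant Finsler metric $\mathcal{F}^\kappa$ (using item (1) for metric completeness and the bi-Lipschitz control from \Cref{prop: conjugated to Finsler flow}(1) for coercivity) produces a Finsler minimizer from $\hat{x}$ to $\hat{y}$. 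By \Cref{prop: conjugated to Finsler flow}(2), its projection to $G$ is the desired minimizing magnetic geodesic of energy $\kappa$ from $x$ to $y$. The main obstacle is this last Finsler variational step: the Hilbert-space techniques of \cite{Bauer_2025} must be carried over to the Finsler setting while handling the non-smoothness of $\mathcal{F}^\kappa$ at the zero section and the loss of the inner product on the appropriate path spaces. A secondary concern is justifying, in the absence of globally defined Christoffel symbols on $G$ \cite{BauerBruverisMichor2014}, the boundedness of the Euler--Arnold operator required by the right-invariant reduction used in items (2)--(4).
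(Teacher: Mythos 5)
Your proposal is essentially correct and, for items (1) and (5), follows the same route as the paper: bi-Lipschitz equivalence of $\hat d_{\mathcal{F}^\kappa}$ and $\hat d_{\hat{\mathcal{G}}}$ plus \cite[Thm.~7.7]{Bauer_2025} for completeness, and the direct method applied to the right-reduced functional $\xi\mapsto\int_0^1\mathcal{F}^\kappa(e,\xi(t))^2\,\mathrm{d}t$ on $A_{\hat y\hat x^{-1}}$ for connectedness. The two ``obstacles'' you flag for item (5) are not real ones: $\mathcal{F}^\kappa(e,\cdot)$ is a continuous convex positively homogeneous functional equivalent to the Hilbert norm by \Cref{prop: conjugated to Finsler flow}(1), so $\mathcal{F}^\kappa(e,\cdot)^2$ is convex and continuous, hence the energy functional is sequentially weakly lower semicontinuous and coercive on $L^2$; non-smoothness at the zero section plays no role in the direct method, and this is exactly how the paper closes the argument before invoking Eberlein--\v{S}mulyan and \Cref{prop: conjugated to Finsler flow}(2).

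For items (2)--(4) you give a more explicit argument (constant speed, metric completeness of $(G,d_{\mathcal{G}})$ to produce $x_\infty$, right-trivialization to an autonomous ODE $\dot u=F(u)$, boundedness of $F$ on bounded sets to extend past $T$), where the paper simply defers to the second proof strategy of \cite[Thm.~7.7]{Bauer_2025} and \cite[Lemma~6.2]{GayBalmazRatiu2015} via the Hamiltonian picture of \Cref{lemm: Hamiltonian picture of magnetic geodesic flow}. Your version is fine in substance, and your ``secondary concern'' about the Euler--Arnold operator is settled by \cite[Lemma~7.4]{Bauer_2025} (existence and boundedness of $\operatorname{ad}^\top$ for strong right-invariant metrics), which the paper already uses elsewhere. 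The one genuine soft spot is your claim that the $C^\ell$-statement ``goes through verbatim'': on $G^\ell$ the restricted metric is only \emph{weak}, so metric completeness of $(G^\ell,d_{\mathcal{G}})$ is not available and your extension argument cannot simply be repeated there. The correct mechanism, and the one the paper uses, is to solve in $G$ and then invoke the no-loss--no-gain result (\Cref{cor: no-loss-no-gain for exp}, resting on \cite[Thm.~7.5]{Bauer_2025}) to conclude that the magnetic flow restricts to $TG^\ell$. You should replace the ``verbatim'' remark by an explicit appeal to that restriction property.
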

\begin{rem}
    For $\sigma = 0$, we recover the recent result of Bauer–Harms–Michor~\cite[Thm.~7.7]{Bauer_2025}. 
\end{rem}

\begin{rem}
   By an argument along the lines of the proof of \Cref{IThm: HopfRinow half lie group magnetic}, 
one sees that statements~\ref{it: main thm b int}--\ref{it: main thm d} hold in general 
for all right-invariant magnetic systems \( (G, \mathcal{G}, \sigma) \), regardless of whether $\sigma$ is weakly exact, and they also hold for the magnetic systems \( (G^\ell, \G, \sigma) \) for all $\ell\geq 1$.
\end{rem}
\begin{rem}
  To the best of the author's knowledge, \Cref{IThm: HopfRinow half lie group magnetic int} 
is the first result of this kind for magnetic systems on 
infinite-dimensional manifolds.  
    It extends the results of~\cite[Cor.~B]{Co06} 
for exact magnetic systems and of~\cite[Thm.~3.2]{Merry2010} 
for weakly exact magnetic systems 
from closed finite-dimensional manifolds 
to the infinite-dimensional setting.
.
\end{rem}
\begin{rem}
    If \( \k \le c(M, g, \sigma) \), there may exist pairs of points that cannot be connected by a magnetic geodesic of energy \( \k \); see~\cite{M24}. 
    This phenomenon already occurs in the finite-dimensional case, for instance when \( S^3 = SU(2) \) \cite{ALBERS2025105521}. 
    However, even for finite-dimensional closed manifolds, there is in general no good understanding of which pairs of points fail to be connected for a given subcritical energy.
\end{rem}
Next, we illustrate \Cref{IThm: HopfRinow half lie group magnetic int} in the case of the group of Sobolev diffeomorphisms on a closed manifold.

\subsection{First illustrations of \Cref{IThm: HopfRinow half lie group magnetic int}} In the following, we illustrate \Cref{IThm: HopfRinow half lie group magnetic int} 
first through a purely geometric application to groups of Sobolev diffeomorphisms, 
and second through an application to geometric hydrodynamics via the so-called 
\emph{magnetic Euler–Arnold equations}.\medskip

\textbf{Magnetic geodesics on Sobolev diffeomorphism groups.} 
Let \( (M, g) \) be a compact, finite-dimensional Riemannian manifold. 
We consider the group of Sobolev diffeomorphisms \( \mathrm{Diff}^{H^s}(M) \) 
of Sobolev order \( s > \frac{\dim M}{2} + 1 \); for more details, we refer to \Cref{ex: sobolov lie group}. 
We equip this infinite-dimensional half-Lie group with the strong, right-invariant Sobolev metric of order~\( s \), defined by
\begin{equation*}
    \mathcal{G}^s_{\varphi}(h \circ \varphi,\, k \circ \varphi) 
    = \int_M g\big( (1 - \Delta)^{s/2} h,\; (1 - \Delta)^{s/2} k \big)\, \mathrm{dvol}_g, 
    \qquad \forall\, \varphi \in \mathrm{Diff}^{H^s}(M),
\end{equation*}
where \( h \) and \( k \) are \( H^s \)-vector fields on \( M \), and where 
\( \Delta \) denotes the Laplacian with respect to the Riemannian volume form \( \mathrm{dvol}_g \) associated with the Riemannian metric \( g \). 

Using the results of~\cite{EM70,BauerBruverisCismasEscherKolev2020,BauerEscherKolev2015}, 
one sees that \( \mathcal{G}^s \) is indeed a strong Riemannian metric on \( \mathrm{Diff}^{H^s}(M) \). 
We emphasize, however, that for non-integer values of \( s \), this relies on highly non-trivial analytic estimates, 
which have been established only recently~\cite{BauerBruverisCismasEscherKolev2020,BauerEscherKolev2015}. 

As a consequence, we can conclude from~\ref{it: main thm a int}--\ref{it: main thm d int} 
in \Cref{IThm: HopfRinow half lie group magnetic int} that for every right-invariant magnetic system  
\( (\mathrm{Diff}^{H^s}(M), \mathcal{G}^s, \sigma) \), 
with \( \sigma \) as in \Cref{prop: conjugated to Finsler flow}, 
metric completeness holds for all energies above Mañé’s critical value, 
and magnetic geodesic completeness follows. 

To prove magnetic geodesic convexity above Mañé’s critical value, 
one additionally has to verify the extra assumption stated before~\ref{it: main thm e int} 
in \Cref{IThm: HopfRinow half lie group magnetic int}, 
which has been shown implicitly in~\cite{BruverisVialard2017}. 
Thus, for all right-invariant magnetic systems 
\( (\mathrm{Diff}^{H^s}(M), \mathcal{G}^s, \sigma) \) as considered above, 
magnetic geodesic convexity holds for energies above Mañé’s critical value. 

By choosing \( \sigma = 0 \), we recover all completeness results for the group of Sobolev diffeomorphisms 
as obtained in~\cite{BruverisVialard2017}.\medskip

\textbf{Magnetic Euler–Arnold equation.} 
We close this subsection with an application of \Cref{IThm: HopfRinow half lie group magnetic int} 
to geometric hydrodynamics, in the context of the \emph{magnetic Euler–Arnold equation} 
recently introduced by the first author in~\cite{Maier25magneticEulerArnold}. 
By \Cref{IThm: HopfRinow half lie group magnetic int}, we know that for the restriction of the magnetic system 
considered above to the group of Sobolev diffeomorphisms, restricted to its \( C^k \), 
and in particular \( C^1 \) elements, the magnetic geodesic completeness statements 
\ref{it: main thm b int}--\ref{it: main thm d int} hold. 
That is, the magnetic geodesic flow of the system 
\begin{equation}\label{eq: magn sysetm on sobolev lie groups}
    \big( \mathrm{Diff}^{H^{s+1}}(M),\, \mathcal{G}^s,\, \sigma \big)
\end{equation}
exists globally, where we have used that, by \Cref{Ex:Diff_differentiable_elements}, 
the \( C^1 \)-elements in \( \mathrm{Diff}^{H^{s}}(M) \) are precisely \( \mathrm{Diff}^{H^{s+1}}(M) \). 

Using \cite[Lemma 7.4]{Bauer_2025}, the adjoint of \( \mathrm{ad} \) exists, and by the discussion following \eqref{e:mg} 
the Lorentz force of this system exists; thus we can apply \cite[Cor. 2.9, Thm. 2.10]{Maier25magneticEulerArnold}. 
That is, a curve \( \varphi \) is a magnetic geodesic of the system in \eqref{eq: magn sysetm on sobolev lie groups}
if and only if \( u:= \dot{\varphi}\circ\varphi^{-1} \) is a solution of the following partial differential equation:
\begin{equation}\label{eq: MEpDiff}
    m_t + \nabla_u m + (\nabla u)^{\!\top} m + (\operatorname{div} u)\, m = -\, A_s\big(Y_{\id}(u)\big), 
    \qquad m = A_s u, \tag{MEpDiff}
\end{equation}
Here \( A_s = (1 - \Delta_g)^s \) denotes the inertia operator, \( Y \) the Lorentz force of the magnetic system in~\eqref{eq: magn sysetm on sobolev lie groups},
\( m \) is the \emph{momentum density}, 
and \( \nabla \) and \( (\nabla u)^{\!\top} \) are taken with respect to the Levi--Civita connection of \( (M, g) \).\\
We call~\eqref{eq: MEpDiff} the \emph{magnetic EPDiff equation}; in the case of a vanishing magnetic field \( \sigma=0 \) we recover the classical EPDiff equation, the geodesic equation of $(\Diff^{H^s}(M), \G^s)$.

Next, by using the duality between magnetic geodesics of the system~\eqref{eq: magn sysetm on sobolev lie groups} and solutions of \eqref{eq: MEpDiff}, 
together with the fact that the magnetic geodesic flow of \eqref{eq: magn sysetm on sobolev lie groups} exists globally, 
we obtain global well-posedness for \eqref{eq: MEpDiff} for vector fields \( u \) of Sobolev class at least \( s > \frac{\dim M}{2} + 1 \).

\subsection{Outline of the paper.}
In \Cref{s:preliminaries}, we recall the notion of half-Lie groups and their \( C^k \)-elements, 
and we provide background on infinite-dimensional magnetic systems, 
making rigorous in this setting what is well known in finite dimensions. \\
In \Cref{s: Mañé's critical value on the universal cover}, 
we provide additional details concerning the definition of Mañé’s critical value on the universal cover 
and illustrate it by proving \Cref{prop: conjugated to Finsler flow}. 
We also show that there is no vanishing geodesic distance phenomenon 
for the Finsler metric in \Cref{prop: conjugated to Finsler flow}, 
and that the no-loss–no-gain result of~\cite{Bauer_2025} 
extends to magnetic geodesics, 
that is, the magnetic flow does not alter regularity. \\
Finally, in \Cref{s: The Hopf--Rinow theorem for magnetic geodesics on half Lie groups.}, 
we provide a proof of \Cref{IThm: HopfRinow half lie group magnetic int}.\\

\noindent
\textbf{Acknowledgments.} L.M.\ thanks P.~Albers, G.~Benedetti, B.~Khesin, and L.~Assele  for many helpful discussions on Hamiltonian systems. 
L.M.\ would also like to thank M.~Bauer and P.~Michor for insightful discussions about their work~\cite{Bauer_2025}, as well as the participants of \emph{Math en plein air 2025} and \emph{Dynamische Systeme MFO 2025} for valuable feedback on earlier versions of this work.\\
The authors acknowledge funding by the Deutsche Forschungsgemeinschaft (DFG, German Research Foundation) – 281869850 (RTG 2229), 390900948 (EXC-2181/1), and 281071066 (TRR 191). L.M.\ gratefully acknowledges the excellent working conditions and stimulating interactions at the Erwin Schrödinger International Institute for Mathematics and Physics in Vienna during the thematic programme \emph{``Infinite-dimensional Geometry: Theory and Applications''}, where part of this work was carried out. \\
Finally, L.M. thanks F. Schlenk for his warm hospitality at the University of Neuchâtel in October 2025, during which part of this work was completed.

\section{Preliminaries}\label{s:preliminaries}
We begin by fixing some notation. For a group $G$ we denote by $\mu \colon G \times G \to G$ the group multiplication and by $\mu_x, \mu^y$ left and right translations respectively:
\[
\mu(x, y) = \mu_x(y) = \mu^y(x).
\]

\subsection{Half-Lie groups}\label{ss: Half Lie grous}
Let us give a precise definition of half-Lie group and discuss a few examples.
\begin{defn}
    A \textit{right (resp.\ left) half-Lie group} is a topological group endowed with a smooth structure such that right (resp.\ left) multiplication by any element is smooth.
\end{defn}
\begin{rem}
    Half-Lie groups can be modeled on various different types of spaces. In this work we stick to Banach (and Hilbert) half-Lie groups.
\end{rem} 
The main examples include groups of $H^s$- or $C^k$-diffeomorphisms (see~\cite{EM70}), 
as well as semidirect products of a Lie group with kernel an infinite-dimensional representation space. 
Since the first case will serve as our guiding example, we now describe it in more detail.

\begin{example}\label{ex: sobolov lie group}
If $(M, g)$ is a finite-dimensional compact Riemannian manifold or an open Riemannian manifold of bounded geometry, 
then the diffeomorphism group $\mathrm{Diff}^{H^s}(M)$ of Sobolev regularity $s > \dim(M)/2 + 1$ is a half-Lie group. 
Likewise, the groups  $\mathrm{Diff}^{C^k}(M)$ for $1 \leq k < \infty$ 
are half-Lie groups. However, they are not Lie groups because left multiplication is non-smooth.
For a thorough explanation, we refer to~\cite{EM70}.
\end{example}

While left multiplication is only required to be continuous, some elements display better regularity. This is captured by the following definition.
\begin{defn}\label{Def: ck elements}
    An element \( x \in G \) of a Banach half-Lie group is of class \( C^k \) if the left translations \( \mu_x, \mu_x^{-1} \colon G \to G \) are \( k \) times continuously Fréchet differentiable. We denote by \( G^k \) the set of \( C^k \) elements.
\end{defn}

\begin{example}
\label{Ex:Diff_differentiable_elements}
Let $M$ be a closed manifold. Then by~\cite[Ex. 3.5]{Bauer_2025} it holds that
\[
(\mathrm{Diff}^{C^k}(M))^{(\ell)} = \mathrm{Diff}^{C^{k+\ell}}(M).
\]
\end{example}
For further details on $C^k$-elements of half-Lie groups, we refer the reader to \cite[§§2–3]{Bauer_2025}. 

We now recall the notion of regular half-Lie groups in the setting where the half-Lie group carries a Banach manifold structure.
\begin{defn}[Regular half-Lie groups]\label{def:regular half Lie group}
Let $G$ be a Banach half Lie group, and let $F$ be a subset of  $L^1_{\mathrm{loc}}(\RR, T_e G)$.  
We say that $G$ is \emph{$F$–regular} if for every $X \in F$ there exists a unique solution $ g \in W^{1,1}_{\mathrm{loc}}(\RR, G)$ of the differential equation
\[
    \partial_t g(t) \;=\; T_e \mu_{g(t)}\, X(t), 
    \qquad g(0) = e.
\]
This solution is called the \emph{evolution} of $X$ and is denoted by $\mathrm{Evol}(X)$. 
Its evaluation at $t=1$ is denoted by $\mathrm{evol}(X)$.
\end{defn}

\subsection{Riemannian metrics on half-Lie groups}
Following \cite{Bauer_2025}, we recall the notion of strong right-invariant Riemannian metrics.
\begin{defn}\label{Def: strong right invariant Riemannian metrics}
A Riemannian metric \( \mathcal{G} \) on a half-Lie group \( G \) is called 
\emph{$G$-right-invariant} if
\[
  \mathcal{G}_x\big( T_e \mu^x v,\, T_e \mu^x w \big)
  = \mathcal{G}_e(v, w),
  \quad \forall\, x \in G,\, v, w \in T_e G .
\]
The Riemannian metric \( \mathcal{G} \) is called \emph{strong} if 
\( \mathcal{G}_x \) induces the manifold topology on \( T_x G \) for every \( x \in G \); 
otherwise, it is called \emph{weak}.
\end{defn}

Kriegl and Michor \cite{KrieglMichor1997} define \textit{convenient manifolds} as manifolds modeled on convenient vector spaces, i.e. Mackey complete locally convex spaces. All Banach and Fréchet spaces are convenient. Strong Riemannian metrics on such manifolds admit the following useful characterization.
\begin{thm}[{\cite[Theorem 7.2]{Bauer_2025}}]
    Let \( M\) be a convenient manifold equipped with a Riemannian metric $\G$. Then, \( \G \) is a strong Riemannian metric if and only if the canonical map
    \begin{equation*}
\mathcal{G}^\vee \colon TM \longrightarrow T^*M, 
\quad (x, v) \mapsto \mathcal{G}_x(v, \cdot) \,  ,
\end{equation*}
    is a bundle isomorphism.
\end{thm}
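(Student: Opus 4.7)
The plan is to argue fiber by fiber and then upgrade to a statement about the smooth bundle map $\mathcal{G}^\vee$, handling the two implications separately. For the forward direction, assume $\mathcal{G}$ is strong. Then each fiber $(T_xM, \mathcal{G}_x)$ is a Hilbert space whose inner-product topology coincides with the topology inherited from the manifold structure. The classical Riesz representation theorem then produces, for every $x$, an isometric Banach-space isomorphism $T_xM \to T_x^*M$, $v \mapsto \mathcal{G}_x(v,\cdot)$; this is precisely the fiberwise action of $\mathcal{G}^\vee$. Smoothness of $\mathcal{G}^\vee$ follows from smoothness of $\mathcal{G}$ as a section of $T^{(0,2)}M$, and smoothness of the fiberwise inverse then follows from the standard fact that inversion of toplinear isomorphisms between Banach spaces is a smooth operation on the open set of invertible elements of $L(E,F)$.

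For the backward direction, assume $\mathcal{G}^\vee$ is a bundle isomorphism. Fix $x \in M$, write $\|\cdot\|_M$ for the norm induced by the manifold structure on $T_xM$, and set $\|v\|_{\mathcal{G}} := \sqrt{\mathcal{G}_x(v,v)}$; my goal is to show these two norms are equivalent. Continuity of $\mathcal{G}^\vee_x \colon T_xM \to T_x^*M$ with operator norm $C$ gives
\begin{equation*}
    \|v\|_{\mathcal{G}}^2 \;=\; \mathcal{G}_x(v,v) \;=\; \langle \mathcal{G}^\vee_x v,\, v\rangle \;\le\; \|\mathcal{G}^\vee_x v\|_{T_x^*M}\,\|v\|_M \;\le\; C\, \|v\|_M^2 ,
\end{equation*}
so $\|v\|_{\mathcal{G}} \le \sqrt{C}\,\|v\|_M$. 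For the reverse inequality I would apply Cauchy--Schwarz for $\mathcal{G}_x$ and use the bound just obtained to get
\begin{equation*}
    \|\mathcal{G}^\vee_x v\|_{T_x^*M} \;=\; \sup_{\|w\|_M = 1} |\mathcal{G}_x(v,w)| \;\le\; \sqrt{C}\,\|v\|_{\mathcal{G}} ,
\end{equation*}
and then combine this with continuity of $(\mathcal{G}^\vee_x)^{-1}$ (with norm $C'$) to conclude $\|v\|_M \le C' \sqrt{C}\,\|v\|_{\mathcal{G}}$. Hence the Hilbert topology coincides with the manifold topology on $T_xM$, which is exactly the defining condition for strongness.

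The main obstacle I anticipate is not the fiberwise algebra---which is essentially Riesz plus Cauchy--Schwarz---but rather the convenient-analytic packaging: one must ensure that a smooth bundle morphism which is fiberwise a topological linear isomorphism actually has smooth inverse in the sense of Kriegl--Michor, and, in the backward direction, that ``bundle isomorphism'' really does deliver continuity of $(\mathcal{G}^\vee)^{-1}$ between fibers in the ambient manifold topology. These issues are mild when the model space is Banach or Hilbert (which is the setting used throughout the paper), but need to be spelled out carefully to handle full convenient generality.
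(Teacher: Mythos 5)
The paper does not prove this statement at all: it is imported verbatim from \cite[Thm.~7.2]{Bauer_2025}, so there is no in-paper argument to compare yours against. On its own merits, your proof is the standard one and is correct in the Banach/Hilbert setting in which the paper actually uses the theorem: the forward direction is Riesz representation fiberwise plus smoothness of operator inversion in local trivializations, and the backward direction correctly establishes the two-sided norm equivalence $\|v\|_{\mathcal{G}}\le\sqrt{C}\,\|v\|_M$ and $\|v\|_M\le C'\sqrt{C}\,\|v\|_{\mathcal{G}}$ from boundedness of $\mathcal{G}^\vee_x$ and its inverse. The one caveat you flag yourself is the only real one: the theorem is stated for convenient manifolds, and your norm-based argument for the backward direction presupposes that the model space is normable; in the forward direction this is automatic (a Mackey-complete normable space is Banach, so a strong metric forces each fiber to be Hilbert), but in the backward direction one must either argue that a bounded isomorphism $T_xM\to T_x^*M$ induced by a positive symmetric form already forces normability, or simply restrict to the Banach case as the paper implicitly does. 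With that restriction made explicit, the proof is complete.
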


\subsection{Infinite dimensional magnetic system}
For a given magnetic system $(M, \mathcal{G}, \sigma)$, consisting of a Hilbert manifold $M$, 
a strong Riemannian metric $\mathcal{G}$, and a closed two-form $\sigma \in \Omega^2(M)$, 
we already mentioned in the introduction that the conservation of energy for magnetic geodesics 
of $(M, g, \sigma)$ reflects the Hamiltonian nature of the system. 
Indeed, recall that the \emph{magnetic geodesic flow} is defined on the tangent bundle by
\[
\Phi_{g,\sigma}^t \colon TM \to TM, \quad 
(q, v) \longmapsto \big( \gamma_{q,v}(t),\, \dot{\gamma}_{q,v}(t) \big), 
\quad \forall\, t \in (a,b),
\]
where $\gamma_{q,v}$ denotes the unique magnetic geodesic with initial condition $(q, v) \in TM$. 
This curve exists since, in this setting, the magnetic geodesic equation~\eqref{e:mg} is locally well-posed by the Picard–Lindelöf theorem. The before mentioned Hamiltonian nature is settled by the following lemma
\begin{lem}\label{lemm: Hamiltonian picture of magnetic geodesic flow}
    The magnetic geodesic flow of $(M,\G, \sigma)$ is the Hamiltonian flow of
    the kinetic energy $E \colon TM \to \RR$ and the twisted symplectic form
\[
\omega_\sigma = \d\lambda - \pi_{TM}^*\sigma,
\]
    where $\lambda$ is the metric pullback of the canonical Liouville $1$-form from $T^*M$ to $TM$ via the metric $g$, and $\pi_{TM} \colon TM \to M$ is the basepoint projection.
\end{lem}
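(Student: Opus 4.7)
The plan is to compute the Hamiltonian vector field $X_E$ of $E$ with respect to the twisted symplectic form $\omega_\sigma$ in the horizontal/vertical splitting of $T(TM)$ induced by the Levi--Civita connection $\nabla$ of $\mathcal{G}$, and then identify its integral curves with magnetic geodesics.

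First I would verify that $\omega_\sigma$ is a symplectic form. Closedness follows from $\mathrm{d}\sigma = 0$, while non-degeneracy uses the fact that $\mathcal{G}$ is strong, so the musical isomorphism $\mathcal{G}^\vee \colon TM \to T^*M$ is a bundle isomorphism, making $\mathrm{d}\lambda$ the pullback of the canonical symplectic form on $T^*M$ and hence itself (strongly) symplectic; the horizontal perturbation $\pi_{TM}^*\sigma$ preserves non-degeneracy, as a pointwise check in the connection splitting shows. By the same token, the Levi--Civita connection of $(M,\mathcal{G})$ exists and induces an identification
\[
T_{(x,v)}(TM) \;=\; H_{(x,v)} \oplus V_{(x,v)} \;\cong\; T_xM \oplus T_xM,
\]
where horizontal vectors are horizontal lifts and vertical vectors are the canonical fiber tangents.

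Next, using the well-known formulas in this splitting,
\[
\mathrm{d}\lambda_{(x,v)}\!\big((u_H, u_V),(w_H, w_V)\big) \;=\; \mathcal{G}_x(u_V, w_H) - \mathcal{G}_x(w_V, u_H),
\]
\[
(\pi_{TM}^*\sigma)_{(x,v)}\!\big((u_H, u_V),(w_H, w_V)\big) \;=\; \sigma_x(u_H, w_H),
\]
\[
\mathrm{d}E_{(x,v)}(u_H, u_V) \;=\; \mathcal{G}_x(v, u_V),
\]
where the last identity uses that horizontal lifts preserve the fiber norm, I would propose the candidate Hamiltonian vector field $X_E := (v,\, Y_xv)$ in the splitting and verify it solves the Hamiltonian equation. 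Pairing against $W = (u_H, u_V)$,
\[
\omega_\sigma(X_E, W) \;=\; \mathcal{G}_x(Y_xv, u_H) - \mathcal{G}_x(u_V, v) - \sigma_x(v, u_H) \;=\; -\mathcal{G}_x(v, u_V),
\]
where the first and third terms cancel by the defining relation~\eqref{e:Lorentz} for the Lorentz force $Y$. Thus $\omega_\sigma(\,\cdot\,, X_E) = \mathrm{d}E$, so $X_E$ is the Hamiltonian vector field of $E$ with respect to $\omega_\sigma$.

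Finally, an integral curve $(\gamma(t), v(t))$ of $X_E$ satisfies $\dot\gamma = v$ (horizontal component) and $\nabla_{\dot\gamma} v = Y_\gamma v$ (vertical component by the very meaning of the vertical projection), which is exactly the magnetic geodesic equation~\eqref{e:mg}; hence the Hamiltonian flow of $(E,\omega_\sigma)$ coincides with $\Phi_{g,\sigma}^t$. The main analytic obstacle throughout is ensuring that the finite-dimensional linear-algebra manipulations genuinely make sense on the Hilbert manifold $M$: non-degeneracy of $\omega_\sigma$, the existence of the Levi--Civita connection (and hence of the horizontal/vertical decomposition), and the unique solvability of the equation $\iota_{X_E}\omega_\sigma = -\mathrm{d}E$ for $X_E$ all rely on $\mathcal{G}$ being a \emph{strong} Riemannian metric so that $\mathcal{G}^\vee$ is a bundle isomorphism.
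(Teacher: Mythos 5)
Your proposal is correct and matches the paper's approach: the paper likewise first establishes that $\omega_\sigma$ is a strong symplectic form using strongness of $\mathcal{G}$ (working with the local expression on $T^*M$ and the reflexivity of the Hilbert model space rather than in the connection splitting) and then defers the remaining identification of the flow to the standard finite-dimensional computation, which is precisely the horizontal/vertical splitting argument you carry out explicitly. The only caveat is a sign convention: with your formula for $\mathrm{d}\lambda$ your candidate field satisfies $\iota_{X_E}\omega_\sigma = -\mathrm{d}E$, whereas the paper later adopts the convention $\iota_{X_H}\omega_\sigma = \mathrm{d}H$; this is harmless, since both sign choices for $\mathrm{d}\lambda$ in the splitting occur in the literature and your computation is internally consistent, producing the correct equation $\nabla_{\dot\gamma}\dot\gamma = Y_\gamma\dot\gamma$.
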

\begin{rem}
    For $\sigma = 0$, we recover in this picture the Hamiltonian formulation of the geodesic flow of $(M,g)$.
\end{rem}
\begin{proof}
First, we observe that the twisted symplectic form $\omega_{\sigma}$ is a strong symplectic form, 
that is, for every $x \in M$, the map 
\[
\omega_{\sigma} : T_x M \longrightarrow T_x^* M
\]
is a topological isomorphism. 

To see this, note first that the canonical symplectic structure on $T^*M$ is strong 
(see~\cite{chernoff1974}), since every Hilbert space is a reflexive Banach space. 
Moreover, since $\mathcal{G}$ is strong, the metric pullback $\mathrm{d}\lambda$ of the canonical symplectic form from $T^*M$ to $TM$ is also strong. 
Next, using the local expression of $\omega_{\sigma}$ around $(x, \alpha) \in T^*M$,
\begin{equation*}
(\omega_{\sigma})_{(x, \alpha)}\big( (e, \beta), (e', \beta') \big) 
= \beta(e') - \beta'(e) - \sigma_x(e, e'),
\end{equation*}
for all $(e, \beta), (e', \beta') \in X \times X^*$, where $X$ denotes the model space of $M$, 
we conclude from the fact that $\mathrm{d}\lambda$ is strong that $\omega_{\sigma}$ is indeed strong symplectic form on $TM$. 
The remaining arguments proceed analogously to the finite-dimensional case of magnetic systems, which can be found for example in \cite{Gin}.
\end{proof}
For a general magnetic system, the zero section corresponds to the set of rest points of the flow. For all energy levels \( \kappa \in (0, \infty) \), all of which are regular values of the kinetic energy \( E \), the corresponding \textit{energy hypersurfaces}
\begin{equation}\label{eq: energy surface}
    \Sigma_{\kappa} := E^{-1}(\kappa),
\end{equation}
are invariant under the magnetic geodesic flow.
Thus, the dynamics of \(\varPhi^t_{g,\sigma}\) restrict to each energy surface \(\Sigma_\kappa\).\\

\noindent\textbf{Exact magnetic systems} are magnetic systems \( (M, g, \mathrm{d}\alpha) \) 
for which the magnetic field is an exact two-form. 
In this case, the magnetic geodesic flow admits a Lagrangian formulation. 
The magnetic Lagrangian associated to \( (M, g, \mathrm{d} \alpha) \) is
\begin{equation}\label{eq: magnetic Lagrangian}
    L \colon TM \to \mathbb{R}, \quad L(q, v) := \tfrac{1}{2} |v|^2 - \alpha_q(v) .
\end{equation}
Next, we aim to show that, in this setting, the magnetic geodesic flow \( \Phi^t_{g,\sigma} \) coincides with the Euler–Lagrange flow \( \Phi_L \) associated with the magnetic Lagrangian \( L \), a fact well known in finite dimensions~\cite{Gin}.
The action of a curve \( \gamma \colon [0, T] \to M \) of class \( H^1 \) is defined by
\begin{equation}\label{eq: def free acttion functional}
    S_L(\gamma) = \int_0^T L \big(\gamma(t), \dot{\gamma}(t) \big) \mathop{}\!\mathrm{d} t.
\end{equation}
Critical points $\gc$ of \( S_L \) satisfy the Euler-Lagrange equations, namely
\begin{equation*}
    \frac{\d}{\d t} (\d_v L)_{(\gamma, \dot{\gamma})} = (\d_x L)_{(\gamma, \dot{\gamma})},
\end{equation*}
where \( \d_x L \) and \( \d_v L \) denote the horizontal and vertical differentials respectively.
\begin{rem}
    Note that the above equation only makes sense in a local coordinate chart. However, since solutions of the Euler-Lagrange equations are precisely the critical points of \( S_L \), the equations do not depend on the choice of local coordinates.
\end{rem}
We begin by proving that minimizers of the action functional \( S_L \) are indeed magnetic geodesics.

\begin{lem}\label{lemma:char_geo}
Let \( (M, \mathcal{G}, \mathrm{d}\alpha) \) be an exact magnetic system, and let 
\( \gamma \colon [a, b] \to M \) be a smooth curve. Then the following are equivalent:
\begin{enumerate}
    \item The curve \( \gamma \) is a magnetic geodesic of \( (M, \mathcal{G}, \mathrm{d}\alpha) \) of energy $\k$; \label{itm:i}
    \item The curve \( \gamma \) is a critical point of the action functional \( S_{L+\k} \), 
    where \( L \) is the magnetic Lagrangian associated with \( (M, \mathcal{G}, \mathrm{d}\alpha) \). \label{itm:ii}
\end{enumerate}
\end{lem}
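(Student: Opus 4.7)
The plan is to prove both implications simultaneously by computing the first variation of $S_{L+\kappa}$ along smooth variations with fixed endpoints, and recognizing the resulting Euler--Lagrange equation as the magnetic geodesic equation~\eqref{e:mg}. Fix a smooth variation $\gamma_s \colon [a,b] \to M$ with $\gamma_0 = \gamma$ and $\gamma_s(a)=\gamma(a)$, $\gamma_s(b)=\gamma(b)$ for every $s$, and write $V(t) := \partial_s \gamma_s(t)|_{s=0}$. Because the parameter interval $[a,b]$ is fixed, the additive constant $\kappa$ contributes the $s$-independent term $(b-a)\kappa$ to $S_{L+\kappa}(\gamma_s)$ and therefore drops out of the first variation; hence it suffices to compute the variation of $S_L$.

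For the kinetic term I would use the Levi--Civita connection $\nabla$ of $\mathcal{G}$, whose existence on the Hilbert manifold $M$ is guaranteed by the strong metric assumption in the Bauer--Harms--Michor framework. By metric compatibility together with the torsion-free identity $\nabla_s \dot\gamma_s|_{s=0} = \nabla_t V$, and integration by parts using $V(a)=V(b)=0$,
\[
\frac{d}{ds}\bigg|_{s=0} \int_a^b \tfrac{1}{2}\,\mathcal{G}(\dot\gamma_s,\dot\gamma_s)\,dt
= \int_a^b \mathcal{G}(\dot\gamma,\nabla_t V)\,dt
= -\int_a^b \mathcal{G}(\nabla_{\dot\gamma}\dot\gamma, V)\,dt.
\]
For the magnetic term I would write $f(t,s)=\gamma_s(t)$ and apply the Cartan-type identity
\[
\partial_s\bigl(\alpha(\partial_t f)\bigr) - \partial_t\bigl(\alpha(\partial_s f)\bigr) = d\alpha(\partial_s f,\partial_t f),
\]
which, after integration in $t$ (the total $\partial_t$-term vanishes by the endpoint condition on $V$), yields
\[
\frac{d}{ds}\bigg|_{s=0} \int_a^b \alpha(\dot\gamma_s)\,dt
= -\int_a^b \sigma(\dot\gamma, V)\,dt
= -\int_a^b \mathcal{G}(Y_\gamma \dot\gamma, V)\,dt,
\]
where the last equality uses the defining relation~\eqref{e:Lorentz} of the Lorentz force.

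Combining both contributions,
\[
\frac{d}{ds}\bigg|_{s=0} S_{L+\kappa}(\gamma_s)
= -\int_a^b \mathcal{G}_\gamma\bigl(\nabla_{\dot\gamma}\dot\gamma - Y_\gamma \dot\gamma,\;V\bigr)\,dt.
\]
Since $\mathcal{G}$ is strong, each $T_xM$ carries its intrinsic Hilbert topology, so the fundamental lemma of the calculus of variations applies and the integral vanishes for every admissible $V$ iff $\nabla_{\dot\gamma}\dot\gamma = Y_\gamma\dot\gamma$. This proves the equivalence (i)$\Leftrightarrow$(ii). The compatibility with the energy level $\kappa$ is automatic: for (i)$\Rightarrow$(ii) it is included in the hypothesis, while for (ii)$\Rightarrow$(i) one uses skew-symmetry of $Y$ to conclude that $E(\gamma,\dot\gamma)$ is constant along any critical point, so one simply selects initial data with $E=\kappa$.

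The main technical obstacle I anticipate is justifying the identity $\nabla_s \dot\gamma_s = \nabla_t V$ and the commutations of $\partial_s,\partial_t$ along variations in the infinite-dimensional setting. Both rely crucially on the strong Riemannian assumption on $\mathcal{G}$ (so that $\nabla$ is well defined by the Koszul formula as in Bauer--Harms--Michor) and on the smoothness of the variation; in particular the final appeal to the fundamental lemma requires that $\mathcal{G}_x$ induces the manifold topology on $T_xM$, i.e.\ precisely the strong condition of \Cref{Def: strong right invariant Riemannian metrics}.
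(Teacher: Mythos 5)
Your computation of the first variation with fixed endpoints is essentially the paper's (the paper just cites the standard finite-dimensional computation and states the same formula $\mathrm{d}_\gamma S_{L+\kappa}(\eta)=\int \mathcal{G}(\nabla_{\dot\gamma}\dot\gamma-Y_\gamma\dot\gamma,\eta)\,dt$), and your remarks about the strong metric justifying the Levi--Civita connection and the fundamental lemma are appropriate. However, there is a genuine gap in how you handle the energy level, and it stems from a misreading of the functional. You fix the parameter interval and observe that the constant $\kappa$ ``drops out of the first variation''; but then being a critical point of $S_{L+\kappa}$ is the same as being a critical point of $S_L$, so \emph{every} magnetic geodesic of \emph{every} energy would satisfy (ii), and the equivalence with ``magnetic geodesic \emph{of energy} $\kappa$'' would be false. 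Your attempted fix for (ii)$\Rightarrow$(i) --- ``one simply selects initial data with $E=\kappa$'' --- does not prove that a given critical point has energy $\kappa$; it only restricts attention to those that happen to.

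The point you are missing is that $S_{L+\kappa}$ in~\eqref{eq: def free acttion functional} is the \emph{free-period} action functional: the domain $[0,T]$ is part of the variable, and criticality includes the condition $\partial S_{L+\kappa}/\partial T=0$. The paper computes
\[
\frac{\partial S_{L+\kappa}}{\partial T}(\gamma,T)=\frac{1}{T}\int_0^T\bigl(\kappa-E(\gamma,\dot\gamma)\bigr)\,dt ,
\]
and combines this with conservation of $E$ along Euler--Lagrange orbits to conclude $E\equiv\kappa$; this is exactly where the additive constant $\kappa$ earns its keep. To repair your argument you must include the variation in $T$ (or equivalently reparametrization of the time interval) among the admissible variations; the fixed-endpoint, fixed-interval part of your proof can then stand as written.
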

\begin{proof}
We observe, by a computation following the same lines as in the finite-dimensional case
(see, for instance,~\cite{Abbo13Lect}), that the first variation of \( S_{L+\kappa} \) in \( \gamma \) is given by
\begin{equation*}
\mathrm{d}_{\gamma} S_{L+\kappa}(\eta) 
= \int_{0}^{T} \mathcal{G}_{\gamma(t)} 
\big( \nabla_{\dot{\gamma}} \dot{\gamma} - Y_{\gamma} \dot{\gamma},\, \eta(t) \big) 
\,\mathrm{d}t,
\end{equation*}
for all \( \eta \in \Gamma(\gamma^*TM) \) satisfying \( \eta(0) = \eta(T) = 0 \).
This proves that each critical point of \( S_{L+\kappa} \) corresponds to a solution of the magnetic geodesic equation on \( (M, \mathcal{G}, \mathrm{d}\alpha) \), and conversely, every magnetic geodesic yields a critical point of \( S_{L+\kappa} \).

Moreover, by a standard argument, the variation of \( T \) in the action functional leads to
\[
\frac{\partial S_{L+\kappa}}{\partial T}(\gamma, T)
= \frac{1}{T} \int_0^T \big(\kappa - E(\gamma(t), \dot{\gamma}(t))\big)\, \mathrm{d}t.
\]
Together with the fact that \( E \) is constant along the orbits of the Euler–Lagrange flow,
this identity shows that  \( \gamma \) lies on the energy level
\( E^{-1}(\kappa) \).
This completes the proof.
\end{proof}

To show that the magnetic geodesic flow of \( (M, \mathcal{G}, \sigma) \) coincides with the Euler–Lagrange flow, 
we first need to prove that this flow exists and agrees with its minimizers. 
To describe this, we introduce the following map: 
\begin{equation}\label{eq: defi Legendre transform}
    \mathcal{L} \colon TM \longrightarrow T^*M, 
    \quad (x, v) \mapsto \big( x, (\mathrm{d}_v L)_{(x, v)} \big),
\end{equation}
which is called the \emph{Legendre transform}. 
In the case of a magnetic Lagrangian~\eqref{eq: magnetic Lagrangian}, it takes the form
\begin{equation}\label{eq: legendre transform in case of magnetic Lagrangian}
    \mathcal{L}(x, v) = \big( x,\, \G_x( v, \cdot ) - \alpha_x(\cdot) \big)\quad \forall(x,v)\in TM.
\end{equation}
Since \( \mathcal{G} \) is strong by assumption, the map \( \mathcal{L} \) is a bundle isomorphism. 
Hence, the magnetic Lagrangian \( L \) is \emph{hyperregular} in the sense of~\cite[§7.3]{MardsenRatiuMechanics}. 
By~\cite[Thm.~7.3.3]{MardsenRatiuMechanics}, the flow lines of the Euler–Lagrange flow \( \Phi_L \) of \( L \) 
are precisely the solutions of the Euler–Lagrange equations associated with \( L \). 
Thus, using \Cref{lemma:char_geo}, we obtain the following.

\begin{lem}\label{lem: magnetic flow coincides with ELflow}
The magnetic geodesic flow \( \Phi_t^{\mathcal{G},\sigma} \) of the exact magnetic system \( (M, \mathcal{G}, \mathrm{d}\alpha) \) 
coincides with the Euler–Lagrange flow \( \Phi_L \) of the magnetic Lagrangian \( L \) associated with \( (M, \mathcal{G}, \mathrm{d}\alpha) \) defined as in~\eqref{eq: magnetic Lagrangian}.
\end{lem}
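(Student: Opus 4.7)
The plan is to follow the strategy sketched in the paragraph preceding the statement: reduce the equality of flows to the equality of their orbits, identify those orbits with critical points of the action functional via \Cref{lemma:char_geo}, and use hyperregularity of the magnetic Lagrangian to invoke the Marsden--Ratiu theorem on Euler--Lagrange flows.

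First, I would verify that the Legendre transform $\mathcal{L}\colon TM\to T^*M$ from~\eqref{eq: defi Legendre transform} is a bundle isomorphism. By~\eqref{eq: legendre transform in case of magnetic Lagrangian}, in each fiber $\mathcal{L}$ differs from the musical isomorphism $\G^\vee_x\colon T_xM\to T_x^*M$ only by the additive translation $v\mapsto v-\alpha_x^{\sharp}$, so $\mathcal{L}$ is fiberwise an affine isomorphism. Since $\G$ is a strong Riemannian metric on the Hilbert manifold $M$, the map $\G^\vee$ is a smooth bundle isomorphism, and hence so is $\mathcal{L}$. Consequently, the magnetic Lagrangian $L$ is \emph{hyperregular} in the sense of~\cite[\S7.3]{MardsenRatiuMechanics}.

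Next, I would apply~\cite[Thm.~7.3.3]{MardsenRatiuMechanics}: the Euler--Lagrange flow $\Phi_L$ of a hyperregular Lagrangian is well defined on $TM$, and its flow lines coincide precisely with the curves $t\mapsto(\gamma(t),\dot{\gamma}(t))$ such that $\gamma$ solves the Euler--Lagrange equations of $L$. Given any initial condition $(x,v)\in TM$, let $\gamma_L$ be the projection to $M$ of the unique integral curve of $\Phi_L$ through $(x,v)$, and let $\gamma_{x,v}$ be the unique magnetic geodesic with the same initial condition provided by local well-posedness of~\eqref{e:mg}. Setting $\kappa=\tfrac12\,|v|_x^2$, \Cref{lemma:char_geo} (applied over any subinterval of the common interval of definition) yields that both $\gamma_L$ and $\gamma_{x,v}$ are critical points of $S_{L+\kappa}$, i.e.\ both satisfy the Euler--Lagrange equation associated to $L$. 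The uniqueness statement in~\cite[Thm.~7.3.3]{MardsenRatiuMechanics} therefore forces $\gamma_L=\gamma_{x,v}$, and passing to the tangent lift gives $\Phi_L^t(x,v)=\Phi^{\,t}_{\mathcal{G},\sigma}(x,v)$ for all $t$ in the common domain. A standard maximality argument then ensures that the domains of definition agree.

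The step I expect to require the most care is the hyperregularity check in the infinite-dimensional setting: the direct identification $\mathcal{L}=\G^\vee-\alpha^{\sharp}$ is clean on each fiber, but one must use that $\G$ is \emph{strong} (so that $\G^\vee$ is a topological, not merely algebraic, bundle isomorphism) and that $\alpha$ is smooth, in order to ensure the hypotheses of~\cite[Thm.~7.3.3]{MardsenRatiuMechanics} apply to our Hilbert-manifold setting. Once this is secured, the matching of flows follows by the uniqueness argument above.
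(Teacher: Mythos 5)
Your proposal is correct and follows essentially the same route as the paper: establish that the Legendre transform \eqref{eq: legendre transform in case of magnetic Lagrangian} is a bundle isomorphism because $\mathcal{G}$ is strong, conclude hyperregularity of $L$, invoke~\cite[Thm.~7.3.3]{MardsenRatiuMechanics} to identify Euler--Lagrange flow lines with solutions of the Euler--Lagrange equations, and then use \Cref{lemma:char_geo} to match these with magnetic geodesics. Your fiberwise-affine description of $\mathcal{L}$ as a translate of $\mathcal{G}^\vee$ and the explicit uniqueness/maximality argument are slightly more detailed than the paper's terse presentation, but the substance is identical.
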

\begin{rem}
In forthcoming work~\cite{TonelliHalfLiegroups}, 
the authors show that the constrcution underlying \Cref{lem: magnetic flow coincides with ELflow} fits into a more general framework, 
namely that of so-called Tonelli Lagrangians.
\end{rem}
For the sake of completeness, we also note that it admits a natural dual counterpart. 
We define the Hamiltonian 
\begin{equation}\label{eq: magnetic Hamiltonian}
    H := E \circ \mathcal{L}^{-1} \colon T^*M \longrightarrow \mathbb{R},
\end{equation}
as the Legendre dual of the magnetic Lagrangian \( L \) defined in~\eqref{eq: magnetic Lagrangian}. 
Using this explicit description together with~\eqref{eq: legendre transform in case of magnetic Lagrangian}, we obtain
\[
H(x, p) = \frac{1}{2} | p + \alpha_x |_{\mathcal{G}}^2, 
\qquad \forall\, (x,p) \in T^*M.
\]

Since the canonical symplectic form \( \omega_{\mathrm{can}} \) on \( T^*M \) 
is a strong symplectic form, the Hamiltonian \( H \) induces a unique vector field 
\( X_H \in \mathfrak{X}(T^*M) \) defined by
\begin{equation*}
    \iota_{X_H} \omega_{\sigma} = \mathrm{d}H,
\end{equation*}
called the \emph{Hamiltonian vector field}. Since \( TM \) is a Hilbert manifold and \( X_H \) is a smooth vector field on \( TM \), 
the equations of motion
\[
\dot{\Gamma} = X_H(\Gamma)
\]
are locally well-posed by the Picard–Lindelöf theorem. 
This defines a local flow \( \Phi_t^H \) on \( T^*M \) for \( t \) small enough. 

As \( H \) is, by its definition~\eqref{eq: magnetic Hamiltonian}, 
the Legendre transform of the magnetic Lagrangian \( L \), which is hyperregular, 
it is hyperregular in the sense of~\cite[§7.4]{MardsenRatiuMechanics}. 
Hence, by~\cite[Thm.~7.4.3]{MardsenRatiuMechanics}, 
the Hamiltonian flow of \( H \) is conjugated via the Legendre transform 
to the Euler–Lagrange flow of \( L \). 
Therefore, by \Cref{lem: magnetic flow coincides with ELflow}, we obtain the following.

\begin{cor}
Let \( (M, \mathcal{G}, \mathrm{d}\alpha) \) be an exact magnetic system. 
Then its magnetic geodesic flow \( \Phi_t^{\mathcal{G}, \mathrm{d}\alpha} \) 
is conjugated, via the Legendre transform~\eqref{eq: defi Legendre transform} 
induced by the magnetic Lagrangian \( L \), 
to the Hamiltonian flow of the magnetic Hamiltonian 
\( H \) given in~\eqref{eq: magnetic Hamiltonian} on \( T^*M \).
\end{cor}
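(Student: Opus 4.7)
My plan is to assemble the conjugation from two ingredients already at hand in the excerpt: \Cref{lem: magnetic flow coincides with ELflow}, which identifies the magnetic geodesic flow \( \Phi_t^{\mathcal{G}, \mathrm{d}\alpha} \) with the Euler–Lagrange flow \( \Phi_L \) of the magnetic Lagrangian \( L \); and \cite[Thm.~7.4.3]{MardsenRatiuMechanics}, which asserts that for a hyperregular Hamiltonian the Hamiltonian flow on \( T^*M \) is conjugated by the Legendre transform \( \mathcal{L} \) to the Euler–Lagrange flow on \( TM \). Chaining these gives
\[
    \mathcal{L} \circ \Phi_t^{\mathcal{G}, \mathrm{d}\alpha} \;=\; \mathcal{L} \circ \Phi_L \;=\; \Phi_t^H \circ \mathcal{L},
\]
which is precisely the claim.

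First I would make sure the Legendre transform \( \mathcal{L} \colon TM \to T^*M \) defined in~\eqref{eq: defi Legendre transform} is a smooth bundle isomorphism in our infinite-dimensional setting. Since \( \mathcal{G} \) is a strong Riemannian metric, the induced map \( \mathcal{G}^\vee \colon TM \to T^*M \) is a topological bundle isomorphism, and by~\eqref{eq: legendre transform in case of magnetic Lagrangian} the Legendre transform \( \mathcal{L}(x,v) = \mathcal{G}_x^\vee(v) - \alpha_x \) is a smooth affine translation of \( \mathcal{G}^\vee \) in the fibers. Hence \( L \) is hyperregular in the sense of~\cite[§7.3]{MardsenRatiuMechanics}, and the dual Hamiltonian \( H = E \circ \mathcal{L}^{-1} \), already written down in~\eqref{eq: magnetic Hamiltonian}, is well-defined, smooth, and hyperregular; its fiber derivative coincides with \( \mathcal{L}^{-1} \).

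Next I would invoke \cite[Thm.~7.4.3]{MardsenRatiuMechanics} to obtain the conjugation \( \Phi_t^H = \mathcal{L} \circ \Phi_L \circ \mathcal{L}^{-1} \). Here one uses that the canonical symplectic form \( \omega_{\mathrm{can}} \) on \( T^*M \) is strong (since \( M \) is a Hilbert manifold, so that every Hilbert space is reflexive, see~\cite{chernoff1974}), which guarantees that the Hamiltonian vector field \( X_H \) with \( \iota_{X_H}\omega_{\mathrm{can}} = \mathrm{d}H \) is uniquely defined and smooth, and that its flow \( \Phi_t^H \) exists locally by Picard–Lindelöf. Combining this with \Cref{lem: magnetic flow coincides with ELflow} then produces the desired identity.

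The main obstacle I expect is confirming that the Marsden–Ratiu machinery of~\cite[§7.4]{MardsenRatiuMechanics} applies verbatim in our Hilbert-manifold setting, since it is traditionally phrased for finite dimensions. This reduces to checking the two ingredients the proof uses: strongness of both the canonical symplectic form on \( T^*M \) and of its pullback \( \mathrm{d}\lambda \) on \( TM \) (both already observed in the proof of \Cref{lemm: Hamiltonian picture of magnetic geodesic flow}), and genuine bundle-isomorphy of \( \mathcal{L} \) (established above from strongness of \( \mathcal{G} \)). Once these are in place, the standard proof carries over without modification, so the corollary follows.
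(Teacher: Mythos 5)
Your proposal is correct and follows essentially the same route as the paper: establish hyperregularity of \( L \) (hence of \( H \)) from strongness of \( \mathcal{G} \), invoke \cite[Thm.~7.4.3]{MardsenRatiuMechanics} to conjugate the Hamiltonian flow of \( H \) to the Euler–Lagrange flow of \( L \) via the Legendre transform, and then apply \Cref{lem: magnetic flow coincides with ELflow} to identify the latter with the magnetic geodesic flow. Your extra care in checking that the Marsden–Ratiu machinery applies in the Hilbert-manifold setting (strongness of \( \omega_{\mathrm{can}} \) and of \( \mathrm{d}\lambda \), bundle-isomorphy of \( \mathcal{L} \)) is exactly the groundwork the paper lays in the surrounding discussion.
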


\section{Mañé's critical value on the universal cover}\label{s: Mañé's critical value on the universal cover}
This section is devoted to defining Mañé's critical value on the universal cover 
for right-invariant magnetic systems on half-Lie groups. 
We then illustrate this energy threshold by showing that, for energies above it, 
the lift of the magnetic geodesic flow to the universal cover 
coincides with a Finsler geodesic flow.

\subsection{Definition of the Mañé critical value on the universal cover}
To define Mañé's critical value \( c(G, \mathcal{G}, \sigma) \) for the universal cover \( \hat{G} \) of \( G \), we first observe the following. \\
The universal cover of \( (G, \mathcal{G}) \) exists because, by~\cite[Thm.~7.2]{Bauer_2025}, 
the half-Lie group \( G \) equipped with the strong Riemannian metric \( \mathcal{G} \) 
is a Hilbert manifold endowed with a strong Riemannian metric. 
Its universal cover \( \pi \colon \hat{G} \to G \) is again a topological group, and by lifting the smooth structure from \( G \) 
to the universal cover \( \hat{G} \), the latter also admits a half-Lie group structure such that 
\( \pi \) becomes a smooth topological group homomorphism. \\
Moreover, if \( \mathcal{G} \) is a strong Riemannian metric on \( G \), 
then the lifted metric \( \hat{\mathcal{G}} := \pi^*\mathcal{G} \) 
is a strong Riemannian metric on \( \hat{G} \), 
since strongness is a purely local property of a Riemannian metric. 
Finally, right-invariance of \( \mathcal{G} \) implies right-invariance of \( \hat{\mathcal{G}} \), 
because right multiplication \( \hat{\mu}^x \) on \( \hat{G} \) satisfies 
\[
\pi \circ \hat{\mu}^x = \mu^{\pi(x)} \circ \pi
\qquad \text{for all } x \in \hat{G}.
\]
Recall that a closed two-form \( \sigma \) on \( G \) is called \emph{weakly exact} 
if \( \sigma\!\mid_{\pi_2(G)} = 0 \), that is, if the lift \( \hat{\sigma} \) of \( \sigma \) 
to the universal cover \( \hat{G} \) is exact. 
We denote by \( \hat{\mathcal{G}} \) the lift of \( \mathcal{G} \) to \( \hat{G} \), 
which is \( \hat{G} \)-right invariant and again a strong Riemannian metric, 
as shown above.

With this notation in place, we can now introduce the notion of
\emph{Mañé's critical value} for magnetic systems on half-Lie groups.
\begin{defn}[=\Cref{Def: int maneuni half int}]\label{Def: int maneuni half}
Let \((G, \mathcal{G}, \sigma)\)  be a \( G \)-right-invariant magnetic system.  If $\sigma$ is weakly exact we define the \emph{Mañé critical value} of the universal cover of the magnetic system \((G, \mathcal{G}, \sigma)\) by
\begin{equation}\label{e:maneuni_half}
c(G, \mathcal{G}, \sigma) := \inf_{\mathrm{d}\hat{\alpha} = \hat{\sigma}} \frac{1}{2} \,\Vert \hat{\alpha} \Vert_\infty^2 := \inf_{\mathrm{d}\hat{\alpha} = \hat{\sigma}} \frac{1}{2} \sup_{\hat x\in \hat{G}}\,\vert \hat{\alpha}_{\hat x} \vert_{\hat{\G}}^2
\;\in [0,\infty],
\end{equation}
where the infimum is taken over all $\hat{G}$-right-invariant primitives $\hat{\alpha}$ of $\hat{\sigma}$ (and it is understood to be \( \infty \) should such a primitive not exist). If \( \sigma \) is not weakly exact, we set \( c(G, g, \sigma) = \infty \).
\end{defn}

\subsection{The magnetic geodesic flow as a Finsler flow}
This section is devoted to proving \Cref{prop: conjugated to Finsler flow}. We recall for the convenience of the reader here
\begin{prop}[=\Cref{prop: conjugated to Finsler flow}]\label{prop: conjugated to Finsler flow_sec}
Let $(G, \mathcal{G}, \sigma)$ be a $G$-right-invariant magnetic system with $\sigma$ weakly exact and assume its pullback $\hat{\sigma}$ to the universal cover $\hat{G}$ admits a $\hat{G}$-right-invariant primitive \( \hat{\alpha} \).
    
    Then, for every \( \k > c(G, \mathcal{G}, \sigma) \), we have
    \begin{equation}\label{e:finsler_metric_sec}
        \mathcal{F}^{\kappa} \colon T\hat{G} \to \mathbb{R}, 
        \qquad 
        \mathcal{F}^{\kappa}(\hat{x}, \hat{v}) := \sqrt{2k}\, |\hat{v}|_{\hat{x}} - \hat{\alpha}_{\hat{x}}(\hat{v}),
        \quad 
        \forall\, (\hat{x}, \hat{v}) \in T\hat{G},
    \end{equation}
    defines a $\hat{G}$-right-invariant Finsler metric on \( \hat{G} \) satisfying the following properties:
    \begin{enumerate}
        \item \label{it: 1 finsler flow conj} There exist constants \( C_1, C_2 > 0 \) such that
        \[
            C_1\, \mathcal{F}^\kappa_{e}(\hat{v})
            \;\le\; \sqrt{\hat{\mathcal{G}}_{e}(\hat{v},\hat{v})}
            \;\le\; C_2\, \mathcal{F}^\kappa_{e}(v),
            \qquad 
            \forall\, \hat{v} \in T_e\hat{G} \setminus \{0\}.
        \]
       \item \label{it: 2 finsler flow conj} On the energy surface \( \hat{E}^{-1}(\kappa) := \Sigma_{\kappa} \subseteq T\hat{G} \), as defined in~\eqref{eq: energy surface}, 
the lift of the magnetic flow of \( (G, \mathcal{G}, \sigma) \) 
to the universal cover \( \hat{G} \) coincides with the Finsler geodesic flow of \( (\hat{G}, \mathcal{F}^{\kappa}) \).
    \end{enumerate}
\end{prop}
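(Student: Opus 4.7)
The plan is to verify the Finsler-metric axioms for \( \mathcal{F}^\kappa \) directly from the defining formula \eqref{e:finsler_metric_sec}, read off \ref{it: 1 finsler flow conj} from the resulting sandwich estimate at the identity, and obtain \ref{it: 2 finsler flow conj} by a Maupertuis-type argument anchored on \Cref{lemma:char_geo} and on the Young-type inequality \( \tfrac12 a^2 + \kappa \geq \sqrt{2\kappa}\,a \).

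Since \( \kappa > c(G,\mathcal{G},\sigma) \), the infimum definition \eqref{e:maneuni_half} lets me fix an \( \hat G \)-right-invariant primitive \( \hat\alpha \) of \( \hat\sigma \) with \( \tfrac12 \Vert \hat\alpha \Vert_\infty^2 < \kappa \), i.e.\ \( \sqrt{2\kappa} > \Vert \hat\alpha \Vert_\infty \). Right-invariance of \( \mathcal{F}^\kappa \) is then immediate from that of \( \hat{\mathcal{G}} \) and \( \hat\alpha \), and positive \(1\)-homogeneity in \( \hat v \) is built into the definition. Smoothness off the zero section follows from the smoothness of \( |\cdot|_{\hat{\mathcal{G}}} \) on \( T\hat G\setminus 0 \) and of the linear form \( \hat\alpha \). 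Positivity is verified at the identity via
\[
\mathcal{F}^\kappa_e(\hat v) \;\geq\; \big(\sqrt{2\kappa} - \Vert \hat\alpha \Vert_\infty\big)\,|\hat v|_e \;>\; 0 \qquad (\hat v \neq 0),
\]
and extends to all \( \hat x \) by right-invariance. Strong vertical convexity of \( (\mathcal{F}^\kappa)^2 \) I would obtain by an explicit Hessian computation: decomposing an arbitrary test vector \( \hat w \) into components parallel and perpendicular to \( \hat v \), the Hessian splits into a nonnegative parallel piece of the form \( (\sqrt{2\kappa}\,|\hat w_\parallel| - \hat\alpha(\hat w))^2 \) and a nonnegative perpendicular piece controlled by \( \mathcal{F}^\kappa(\hat v)\sqrt{2\kappa}\,|\hat w_\perp|^2/|\hat v| \); the strict inequality \( \sqrt{2\kappa} > \Vert\hat\alpha\Vert_\infty \) then forces the sum to be bounded below by a positive multiple of \( \hat{\mathcal{G}}(\hat w,\hat w) \). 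Item \ref{it: 1 finsler flow conj} is read off at \( e \) from the sandwich
\[
(\sqrt{2\kappa}-\Vert\hat\alpha\Vert_\infty)\,|\hat v|_e \;\leq\; \mathcal{F}^\kappa_e(\hat v) \;\leq\; (\sqrt{2\kappa}+\Vert\hat\alpha\Vert_\infty)\,|\hat v|_e,
\]
with \( C_1 := 1/(\sqrt{2\kappa}+\Vert\hat\alpha\Vert_\infty) \) and \( C_2 := 1/(\sqrt{2\kappa}-\Vert\hat\alpha\Vert_\infty) \).

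For \ref{it: 2 finsler flow conj} I would pass to the universal cover, where \( (\hat G,\hat{\mathcal{G}},\mathrm{d}\hat\alpha) \) is an exact magnetic system. By \Cref{lemma:char_geo}, its magnetic geodesics of energy \( \kappa \) are precisely the free-time critical points of
\[
S_{L+\kappa}(\hat\gamma) \;=\; \int_0^T \Big(\tfrac12 |\dot{\hat\gamma}|^2 - \hat\alpha_{\hat\gamma}(\dot{\hat\gamma}) + \kappa\Big)\,\mathrm{d}t,
\]
for \( L(\hat x,\hat v) = \tfrac12 |\hat v|^2 - \hat\alpha_{\hat x}(\hat v) \). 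Applying the elementary inequality \( \tfrac12 a^2 + \kappa \geq \sqrt{2\kappa}\,a \) (with equality iff \( a = \sqrt{2\kappa} \)) pointwise with \( a = |\dot{\hat\gamma}(t)|_{\hat\gamma(t)} \) gives \( L+\kappa \geq \mathcal{F}^\kappa \) on \( T\hat G \), with equality precisely on \( \Sigma_\kappa \), and hence
\[
S_{L+\kappa}(\hat\gamma) \;\geq\; \int_0^T \mathcal{F}^\kappa(\hat\gamma,\dot{\hat\gamma})\,\mathrm{d}t,
\]
with equality if and only if \( |\dot{\hat\gamma}| \equiv \sqrt{2\kappa} \). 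Since the right-hand side is reparameterization invariant, a standard Maupertuis argument then yields: (a) any magnetic geodesic of energy \( \kappa \) is, after reparameterization, a Finsler geodesic of \( \mathcal{F}^\kappa \); and (b) conversely, any Finsler geodesic of \( \mathcal{F}^\kappa \), reparameterized to have \( \hat{\mathcal{G}} \)-speed \( \sqrt{2\kappa} \), is a magnetic geodesic of energy \( \kappa \). This is the claimed coincidence of the two flows on \( \Sigma_\kappa \).

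The main obstacle I expect is the verification of the strong convexity of \( (\mathcal{F}^\kappa)^2 \) in the Hilbert-manifold setting, where one needs the vertical Hessian not merely to be positive but to define a bilinear form equivalent to \( \hat{\mathcal{G}} \); the strict inequality \( \sqrt{2\kappa} > \Vert\hat\alpha\Vert_\infty \) coming from \( \kappa > c(G,\mathcal{G},\sigma) \) is exactly what powers this estimate and will be used crucially later to invoke the Finsler Hopf--Rinow theory behind \Cref{IThm: HopfRinow half lie group magnetic int}. A more bookkeeping-level difficulty in \ref{it: 2 finsler flow conj} is tracking the reparameterization carefully, since \( \mathcal{F}^\kappa \) is not constant along a magnetic geodesic of energy \( \kappa \), so the coincidence of the two flows on \( \Sigma_\kappa \) is genuinely up to a smoothly varying time change rather than as literal vector fields.
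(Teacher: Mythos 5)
Your proposal is correct and follows essentially the same route as the paper: the sandwich estimate $(\sqrt{2\kappa}-\Vert\hat\alpha\Vert_\infty)\,|\hat v|\le\mathcal{F}^\kappa_e(\hat v)\le(\sqrt{2\kappa}+\Vert\hat\alpha\Vert_\infty)\,|\hat v|$ from boundedness of $\hat\alpha_e$ and right-invariance for item~(1), Randers-type strong convexity via the vertical Hessian (the paper relegates this to an appendix on the Chern connection), and for item~(2) the same Maupertuis mechanism based on \Cref{lemma:char_geo} and the inequality $\tfrac12 a^2+\kappa\ge\sqrt{2\kappa}\,a$ with equality exactly on $\Sigma_\kappa$. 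Your two refinements --- explicitly choosing a primitive with $\tfrac12\Vert\hat\alpha\Vert_\infty^2<\kappa$ (which the strict inequality $\kappa>c(G,\mathcal{G},\sigma)$ permits and which the positivity argument genuinely needs), and flagging that the identification of the two flows on $\Sigma_\kappa$ holds only up to a time reparameterization since $\mathcal{F}^\kappa$ is not constant along constant-$\hat{\mathcal{G}}$-speed curves --- are both points the paper treats more casually.
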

Before doing so, we recall some useful facts about infinite-dimensional Finsler geometry, 
following the exposition in~\cite[Appendix]{bauerLpFisherRaoMetricAmari2024}.\medskip

\noindent\textbf{Infinite-dimensional Finsler geometry.}
Let \( \mathcal{M} \) be an infinite-dimensional Fréchet manifold with tangent bundle \( T\mathcal{M} \), 
equipped with a strongly convex Finsler structure 
\( \mathcal{F} \colon T\mathcal{M} \to \mathbb{R} \) in the sense of~\cite[Def.~A.1]{bauerLpFisherRaoMetricAmari2024}. \\[0.3em]
For a piecewise smooth path \( \gamma \colon [a, b] \to \mathcal{M} \), 
the length of \( \gamma \) with respect to \( \mathcal{F} \) is defined by
\begin{equation}\label{eq: Finsler length}
    \mathbb{L}_{\mathcal{F}}(\gamma) 
    = \int_a^b \mathcal{F}_{\gamma(t)}\big( \dot{\gamma}(t) \big)\, \mathrm{d}t.
\end{equation}
For any pair of points \( x, y \in \mathcal{M} \), we consider the induced geodesic distance function
\begin{equation}\label{eq: Finsler geodesic distance}
    d_{\mathcal{F}}(x, y) := \inf_{\gamma} \mathbb{L}_{\mathcal{F}}(\gamma),
\end{equation}
where the infimum is taken over all piecewise smooth curves \( \gamma \) connecting \( x \) to \( y \). \\[0.3em]
As in Riemannian geometry, one can show that minimizing the length is equivalent to minimizing 
the \emph{energy functional}, defined by
\begin{equation}\label{eq: def Finsler energy}
    \mathbb{E}_{\mathcal{F}}(\gamma) := \int_a^b \mathcal{F}^2_{\gamma(t)}\big( \dot{\gamma}(t)\big)\, \mathrm{d}t.
\end{equation}
In Riemannian geometry, minimizers of the energy functional are precisely the solutions of the geodesic equation with respect to the Levi--Civita connection. 
In Finsler geometry, this role is played by the so-called Chern connection, as the following lemma shows. 
For a precise definition, we refer to~\cite[Def.~A.6]{bauerLpFisherRaoMetricAmari2024}.
\begin{lem}[{\cite[Lemma~A.9]{bauerLpFisherRaoMetricAmari2024}}]\label{lem:Finsler geodesics chern connectiosn}
Let \( (\mathcal{M}, \mathcal{F}) \) be as above.
Assume in addition that the Chern connection \( \nabla^{\mathcal{F}} \) exists. 
Then the critical points of the energy functional~\eqref{eq: def Finsler energy} 
are precisely the curves satisfying the geodesic equation
\begin{equation}\label{eq: finsler geodesic eq}
    \nabla^{\mathcal{F}}_{\dot{\gamma}(t)} \dot{\gamma}(t) = 0.
\end{equation}
\end{lem}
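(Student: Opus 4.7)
The plan is to compute the first variation of $\mathbb{E}_{\mathcal{F}}$ and identify the resulting Euler--Lagrange condition with the geodesic equation of the Chern connection. Fix a curve $\gamma \colon [a,b] \to \mathcal{M}$ and a smooth variation $\Gamma \colon (-\varepsilon,\varepsilon) \times [a,b] \to \mathcal{M}$, $(s,t) \mapsto \gamma_s(t)$, with $\gamma_0 = \gamma$ and variation field $\eta(t) := \partial_s\gamma_s(t)|_{s=0}$ vanishing at the endpoints. Writing $T := \partial_t\Gamma$ and $S := \partial_s\Gamma$ as sections of $\Gamma^* T\mathcal{M}$, the quantity to analyse is
\begin{equation*}
\frac{d}{ds}\bigg|_{s=0} \mathbb{E}_{\mathcal{F}}(\gamma_s) = \int_a^b \partial_s\big|_{s=0} \mathcal{F}^2_{\gamma_s(t)}\!\big(\dot{\gamma}_s(t)\big)\, dt.
\end{equation*}

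The main step is to express the integrand through the fundamental tensor $g_{\mathcal{F}}$ of $\mathcal{F}$, defined fiberwise as one half of the vertical Hessian of $\mathcal{F}^2$, so that fiberwise $2$-homogeneity of $\mathcal{F}^2$ yields $\mathcal{F}^2(x,v) = g_{\mathcal{F},v}(v,v)$. Using the defining properties of the Chern connection, namely torsion-freeness and almost-compatibility with $g_{\mathcal{F}}$ along the reference direction $T$, one obtains the key identity
\begin{equation*}
\partial_s \mathcal{F}^2_{\gamma_s}(\dot{\gamma}_s) = 2\, g_{\mathcal{F},T}\!\big(T,\, \nabla^{\mathcal{F}}_S T\big).
\end{equation*}
Torsion-freeness on the pullback bundle allows the swap $\nabla^{\mathcal{F}}_S T = \nabla^{\mathcal{F}}_T S$, which at $s=0$ equals $\nabla^{\mathcal{F}}_{\dot\gamma}\eta$. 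A second application of the almost-compatibility of $\nabla^{\mathcal{F}}$ with $g_{\mathcal{F}}$ along $T$ then enables an integration by parts in $t$ whose boundary terms vanish because $\eta(a) = \eta(b) = 0$, producing
\begin{equation*}
\frac{d}{ds}\bigg|_{s=0} \mathbb{E}_{\mathcal{F}}(\gamma_s) = -2 \int_a^b g_{\mathcal{F},\dot\gamma(t)}\!\big(\nabla^{\mathcal{F}}_{\dot\gamma}\dot\gamma,\, \eta(t)\big)\, dt.
\end{equation*}

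To conclude, since every smooth section $\eta$ of $\gamma^* T\mathcal{M}$ vanishing at the endpoints arises as the variation field of some $\Gamma$, the first variation vanishes at $\gamma$ for every admissible $\eta$ if and only if the integrand is pointwise zero for all such $\eta$. The strong convexity of $\mathcal{F}$ makes $g_{\mathcal{F},\dot\gamma(t)}$ fiberwise non-degenerate, so this forces $\nabla^{\mathcal{F}}_{\dot\gamma}\dot\gamma = 0$, which is exactly~\eqref{eq: finsler geodesic eq}. The converse direction is the same computation read backwards.

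The main obstacle is checking the chain of identities relating $\partial_s \mathcal{F}^2$ to $g_{\mathcal{F},T}(T, \nabla^{\mathcal{F}}_S T)$ and justifying the integration by parts: the Chern connection lives on the pullback bundle over the slit tangent bundle and only enjoys an almost-compatibility with $g_{\mathcal{F}}$ along the reference vector, unlike the full metric compatibility of the Levi--Civita connection, so one has to keep track of the homogeneity-induced correction terms and verify that they cancel against the torsion-freeness identity. The remaining infinite-dimensional issue is that $\mathcal{M}$ must admit enough smooth bump-type variation fields for the fundamental lemma of the calculus of variations to force pointwise vanishing of $\nabla^{\mathcal{F}}_{\dot\gamma}\dot\gamma$; in the convenient-manifold setting of~\cite{KrieglMichor1997} this follows from the local existence of smooth charts and partitions of unity along the compact interval $[a,b]$.
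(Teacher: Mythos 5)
Your first-variation argument is essentially correct, and it is the standard proof of this fact; note that the paper itself does not prove this lemma but simply imports it as \cite[Lemma~A.9]{bauerLpFisherRaoMetricAmari2024}, so there is no internal proof to compare against beyond that citation. Your handling of the delicate points is right: the Cartan-tensor corrections arising from the almost $g_{\mathcal{F}}$-compatibility of the Chern connection vanish in both places where they appear, because the Cartan tensor is annihilated whenever one of its arguments equals the reference vector $T$, and torsion-freeness on the pullback bundle justifies the swap $\nabla^{\mathcal{F}}_S T=\nabla^{\mathcal{F}}_T S$; nondegeneracy of the fundamental tensor (from strong convexity) then upgrades the vanishing of the first variation to the pointwise geodesic equation. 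The only caveat worth making explicit is that the fundamental tensor and the Chern connection live on the slit tangent bundle, so the argument requires $\dot{\gamma}(t)\neq 0$ along the curve (which is automatic for the constant-speed curves the paper actually applies the lemma to), and in the infinite-dimensional setting the ``fundamental lemma'' step should be phrased via weak nondegeneracy of $g_{\mathcal{F},\dot\gamma}$ exactly as you indicate.
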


\medskip
The rest of this subsection is devoted to proof \Cref{prop: conjugated to Finsler flow_sec}.
\begin{proof}[\textbf{Proof of \Cref{prop: conjugated to Finsler flow_sec}}]  We start with~\ref{it: 1 finsler flow conj}. 
First, we observe that \( \mathcal{F}^{\kappa} \), as defined in~\eqref{e:finsler_metric_sec}, 
is, outside of the zero section, a \( C^{\infty} \)-map, since it is the sum of two \( C^{\infty} \)-maps on \( T\hat{G} \setminus \{0\} \). 
By its definition~\eqref{e:finsler_metric_sec}, it follows that \( \mathcal{F}^{\kappa} \) 
scales linearly in the fibres and is subadditive in the fibres.

Next, we have to check that \( \mathcal{F}^{\kappa} \) is positive on \( T\hat{G} \setminus \{0\} \). 
Since \( \hat{\G} \) and \( \hat{\alpha} \), and therefore \( \Vert \cdot \Vert_{\hat{\G}} \), 
are \( \hat{G} \)-right-invariant, it suffices to verify this at \( \hat{e} \in \hat{G} \). 
Using that \( \hat{\alpha}_{\hat{e}} \) is a bounded linear functional, we obtain from~\eqref{e:finsler_metric_sec}
\begin{equation}\label{eq: finsler metric lower bound}
    \mathcal{F}^{\kappa}_{\hat{e}}(\hat{v}) 
    = \sqrt{2 \kappa}\, | \hat{v} | - \hat{\alpha}_{\hat{e}}(\hat{v})
    \;\geq\;
    \sqrt{2 \kappa}\, | \hat{v} | 
    - \Vert \hat{\alpha} \Vert_{\infty}\, | \hat{v} |,
    \qquad \forall\, \hat{v} \in T_{\hat{e}}\hat{G}\,,
\end{equation}
where \( \Vert \hat{\alpha} \Vert_{\infty} \) denotes the operator norm of \( \hat{\alpha}_{\hat{e}} \) induced by \( \hat{\G} \). 
Setting \( C_1 = \sqrt{2 \kappa} - \Vert \hat{\alpha} \Vert_{\infty} \), we obtain from~\eqref{eq: finsler metric lower bound}
\begin{equation}\label{eq: lower bound Finsler norm}
    C_1\, | \hat{v} | 
    \leq \mathcal{F}^{\kappa}_{\hat{e}}(\hat{v}), 
    \qquad \forall\, \hat{v} \in T_{\hat{e}}\hat{G}\,.
\end{equation}
Hence \( \mathcal{F}^{\kappa} \) is positive on \( T\hat{G} \setminus \{0\} \), and 
\( \mathcal{F}^{\kappa}_{\hat{e}}(\hat{v}) = 0 \) if and only if \( \hat{v} = 0 \). 
Thus, \( \mathcal{F}^{\kappa} \) defines a Finsler metric on \( \hat{G} \). 

Using again that \( \hat{\alpha}_{\hat{e}} \) is a bounded linear operator and 
that \( \mathcal{F}^{\kappa} \) is \( \hat{G} \)-right-invariant, 
we obtain the upper bound by setting \( C_2 = \sqrt{2 \kappa} + \Vert \hat{\alpha} \Vert_{\infty} \):
\begin{equation}\label{eq: finsler metric upper bound}
    \mathcal{F}^{\kappa}_{\hat{e}}(\hat{v})
    \leq C_2\, | \hat{v} |, 
    \qquad \forall\, \hat{v} \in T_{\hat{e}}\hat{G}\,.
\end{equation}
From~\eqref{eq: lower bound Finsler norm} and~\eqref{eq: finsler metric upper bound}, 
we can conclude that \( \mathcal{F}^{\kappa} \) satisfies the inequality in~\ref{it: 1 finsler flow conj} 
of \Cref{prop: conjugated to Finsler flow_sec}. \\
Finally, we point out that in finite dimensions it is well known that Randers metrics are always strongly convex. 
By a computation analogous to the finite-dimensional case, 
one shows that \( \mathcal{F}^{\kappa} \) is also strongly convex, 
since it has precisely the form of a Randers metric.\medskip

Part~\ref{it: 2 finsler flow conj}. 
We begin by observing that, by assumption, the lift of the magnetic system \( (G, \G, \sigma) \) 
to its universal cover is an exact magnetic system \( (\hat{G}, \hat{\G}, \mathrm{d}\hat{\alpha}) \). 
Thus, by \Cref{lem: magnetic flow coincides with ELflow}, 
the magnetic geodesic flow of \( (\hat{G}, \hat{\G}, \mathrm{d}\hat{\alpha}) \) 
coincides with the Euler--Lagrange flow of the magnetic Lagrangian \( L \) 
defined in~\eqref{eq: magnetic Lagrangian}. 
Hence, by \Cref{lemma:char_geo}, 
a curve \( \gamma \) is a magnetic geodesic of \( (\hat{G}, \hat{\G}, \mathrm{d}\hat{\alpha}) \) 
with energy \( \kappa \) if and only if it is a critical point of the action functional 
\( S_{L + \kappa} \) as defined in~\eqref{eq: def free acttion functional}.
Therefore, it suffices to show that the critical points of \( S_{L + \kappa} \)
are precisely the length minimizers of \( \mathbb{L}_{\mathcal{F}^{\kappa}} \), 
as defined in~\eqref{eq: Finsler length}, and vice versa. 

 For this, we first fix some notation: for given \( \hat{x}, \hat{y} \in \hat{G} \), we denote by  
\begin{equation*}
    \Omega = \{ \gamma \in H^1([0,T], \hat{G}) \mid T > 0,\ \gamma(0) = \hat{x},\ \gamma(T) = \hat{y},\ |\dot{\gamma}| \equiv \sqrt{2\kappa} \}.
\end{equation*}
the space of all \( H^1 \)-curves of constant speed \( \sqrt{2\kappa} \) connecting \( \hat{x} \) and \( \hat{y} \).  
Note that the condition \( |\dot{\gamma}| \equiv \sqrt{2\kappa} \) means that all curves in \( \Omega \) lie on the energy surface \( \Sigma_\k \).  

We are going to show that the minimization problems  
\begin{equation*}
    \inf_{\gamma \in \Omega} \mathbb{L}_{\mathcal{F}^\kappa}(\gamma), 
    \qquad 
    \inf_{\gamma \in \Omega} S_{L + \kappa}(\gamma)
\end{equation*}
are equivalent.  

Note that \( \mathbb{L}_{\mathcal{F}^\kappa} \) is 1-homogeneous in \( \dot{\gamma} \), and hence invariant under time reparametrization, while \( S_{L + \kappa} \) is not.  
Since \( a^2 + b^2 \ge 2ab \), we have  
\begin{equation*}
    \sqrt{2\kappa}\,|\dot{\gamma}| \le \tfrac{1}{2}|\dot{\gamma}|^2 + \kappa,
\end{equation*}
with equality if and only if \( \|\dot{\gamma}\| \equiv \sqrt{2\kappa} \).  
This implies that \( \mathbb{L}_{\mathcal{F}^\kappa}(\gamma) \le S_{L + \kappa}(\gamma) \) for any curve \( \gamma \), and equality holds precisely when the curve is parameterized with constant speed \( \sqrt{2\kappa} \).  
Thus, the statement follows from the fact that \( \mathbb{L}_{\mathcal{F}^\kappa} \) and \( S_{L + \kappa} \) coincide on \( \Omega \).\medskip

In summary we have proven that lifts of magnetic geodesics of the system \( (G,\G,\sigma) \) of energy \( \kappa \) to the universal cover \( \hat G \) are precisely minimizers of the Finsler length \( \mathbb{L}_{\mathcal{F}^\kappa} \). 
What remains is to prove that the minimizers of the Finsler length \( \mathbb{L}_{\mathcal{F}^\kappa} \) are indeed flow lines of the Finsler geodesic flow of \( (\hat{G}, \mathcal{F}^{\kappa}) \); for this it suffices, in light of \Cref{lem:Finsler geodesics chern connectiosn}, to prove that the Chern connection of \( (\hat{G},\mathcal{F}_{\kappa}) \) exists. 
For what follows, we use the explicit form of \( \mathcal{F}_{\kappa} \) to prove that in this specific situation the argument from the finite-dimensional case goes through, included for the sake of completeness in Appendix \ref{appendix:chern_connection}.    
\end{proof}

\subsection{Nondegeneracy of the geodesic distance and no-gain no-loss}
In this subsection, we continue the study of the right-invariant magnetic system
\( (G, \mathcal{G}, \sigma) \) introduced in \Cref{prop: conjugated to Finsler flow_sec}.
From that result, we have seen that this magnetic system gives rise to a Finsler metric
\( \mathcal{F}^\kappa \) on the universal cover \( \hat{G} \) for all \( \kappa > c(G, \mathcal{G}, \sigma) \).
Following~\eqref{eq: Finsler geodesic distance}, we denote by
\( d_{\mathcal{F}^\kappa} \) the geodesic distance associated with the Finsler manifold
\( (\hat{G}, \mathcal{F}^\kappa) \).
With this notation in place, we can now proceed to prove the following result.
\begin{lem}
    The geodesic distance $d_{{\mathcal{F}}^\kappa}$ on $\hat{G}$ is, for all $\kappa > c(G, \mathcal{G}, \sigma) $, non-degenerate; that is, the normal subgroup of elements with vanishing geodesic distance is trivial.
\end{lem}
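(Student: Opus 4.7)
The strategy is to bound the Finsler geodesic distance $d_{\mathcal{F}^\kappa}$ from below by a multiple of the Riemannian geodesic distance $d_{\hat{\mathcal{G}}}$ using the pointwise norm equivalence of item \ref{it: 1 finsler flow conj} in Proposition \ref{prop: conjugated to Finsler flow_sec}, and then invoke the known non-degeneracy of strong right-invariant Riemannian geodesic distances on half-Lie groups from \cite[Thm.~7.3]{Bauer_2025}.

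First, I would transport the inequality $\sqrt{\hat{\mathcal{G}}_e(\hat{v},\hat{v})} \leq C_2\, \mathcal{F}^\kappa_e(\hat{v})$ from the identity to every tangent space. Since both $\mathcal{F}^\kappa$ and $\hat{\mathcal{G}}$ are $\hat{G}$-right-invariant, pushing this bound forward by $T_e \hat{\mu}^{\hat{x}}$ yields
\[
|\hat{w}|_{\hat{\mathcal{G}}} \leq C_2\, \mathcal{F}^\kappa_{\hat{x}}(\hat{w}), \qquad \forall\, \hat{x}\in \hat{G},\ \hat{w}\in T_{\hat{x}}\hat{G}.
\]
For any piecewise smooth curve $\gamma\colon [0,T]\to \hat{G}$ from $\hat{x}$ to $\hat{y}$, integrating gives $\mathbb{L}_{\mathcal{F}^\kappa}(\gamma) \geq C_2^{-1}\,\mathbb{L}_{\hat{\mathcal{G}}}(\gamma)$, and passing to the infimum over such curves yields
\[
d_{\mathcal{F}^\kappa}(\hat{x},\hat{y}) \;\geq\; C_2^{-1}\, d_{\hat{\mathcal{G}}}(\hat{x},\hat{y}).
\]
Consequently, if $\hat{x}\in \hat{G}$ satisfies $d_{\mathcal{F}^\kappa}(\hat{e},\hat{x})=0$, this estimate forces $d_{\hat{\mathcal{G}}}(\hat{e},\hat{x})=0$. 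Since $(\hat{G},\hat{\mathcal{G}})$ is a half-Lie group equipped with a strong right-invariant Riemannian metric, \cite[Thm.~7.3]{Bauer_2025} then gives $\hat{x}=\hat{e}$, establishing triviality of the subgroup of elements with vanishing Finsler distance to $\hat{e}$.

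The main (modest) point to verify is that the hypotheses of \cite[Thm.~7.3]{Bauer_2025} transfer to the universal cover: the strongness and $\hat{G}$-right-invariance of $\hat{\mathcal{G}}$ were already noted at the start of Section \ref{s: Mañé's critical value on the universal cover}, and any Lipschitz property of left translations needed for the cited theorem carries over from $G$ to $\hat{G}$ because the covering map $\pi\colon \hat{G}\to G$ is a local isometry intertwining left and right translations via $\pi\circ \hat{\mu}^{\hat{x}}=\mu^{\pi(\hat{x})}\circ\pi$. I would also remark that the asymmetry of the Randers-type metric $\mathcal{F}^\kappa$ does not cause any obstruction: the one-sided bound $C_2^{-1}|\cdot|_{\hat{\mathcal{G}}} \leq \mathcal{F}^\kappa$ holds on every tangent vector, so the length comparison above is insensitive to the direction in which paths are traversed.
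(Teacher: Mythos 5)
Your proposal is correct and follows essentially the same route as the paper: the paper's proof simply invokes the non-degeneracy of $d_{\hat{\mathcal{G}}}$ from \cite[Thm.~7.3]{Bauer_2025} together with the norm equivalence in item~(1) of \Cref{prop: conjugated to Finsler flow_sec}, and your argument just spells out the right-invariance transport and the length/distance comparison that the paper leaves implicit.
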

\begin{proof}
    By \cite[Thm. 7.3]{Bauer_2025} the geodesic distance of $\hat{G},\hat \G$ is non-degenerate, which in combination with \ref{it: 1 finsler flow conj} in \Cref{prop: conjugated to Finsler flow_sec} finihes the proof.
\end{proof}

We close this subsection by presenting a result of independent interest. 
We begin by recalling a recent theorem of Bauer–Harms–Michor~\cite[Section~7]{Bauer_2025}, 
where the authors show that the exponential map of a strong right-invariant metric on a 
half-Lie group \( G \) restricts to the subgroups \( G^\ell \) of \( C^\ell \)-elements in \( G \).
In particular, this means that there is neither gain nor loss of regularity along geodesics.  
We now aim to establish an analogous statement for the magnetic geodesic flow.

\begin{prop}[No-loss-no-gain]\label{prop: no gain no loose}
Let \( (G, \mathcal{G}, \sigma) \) be a right-invariant magnetic system. 
Then, for every \( \ell \geq 1 \), the magnetic geodesic flow defines a smooth map
\[
\Phi_t^{\mathcal{G}, \sigma} : TG^\ell \longrightarrow TG^\ell,
\]
for all \( t \) sufficiently small. In particular, the evolution along magnetic geodesics preserves regularity: 
there is neither gain nor loss of smoothness.
\end{prop}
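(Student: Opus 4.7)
The plan is to reduce the magnetic geodesic equation to an ODE on the Lie algebra $T_e G$---a magnetic analogue of the Euler--Arnold equation---and then invoke the non-magnetic no-loss--no-gain theorem of Bauer--Harms--Michor. The decisive observation is that the Lorentz force contributes only a bounded linear term at the Lie-algebra level, which is a mild enough perturbation that all of the regularity analysis from \cite[§7]{Bauer_2025} carries through.

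First, I would perform the right-invariant reduction. For a smooth short-time magnetic geodesic $\gamma$ with $\gamma(0)=x$ and $\dot\gamma(0)=v$, set $u(t) := T_{\gamma(t)}\mu^{\gamma(t)^{-1}}\dot\gamma(t) \in T_e G$. Right-invariance of $\G$ and $\sigma$ gives right-equivariance of the Lorentz force $Y$, and a direct computation (compare with the derivation of \eqref{eq: MEpDiff} in the Sobolev diffeomorphism example) translates \eqref{e:mg} into
\begin{equation*}
    \dot u(t) \;=\; -\operatorname{ad}_{u(t)}^{*} u(t) \;+\; Y_e\, u(t),
\end{equation*}
where $\operatorname{ad}^{*}$ is the $\G_e$-adjoint of $\operatorname{ad}$, whose existence is furnished by \cite[Lem.~7.4]{Bauer_2025}, and $Y_e : T_e G \to T_e G$ is the value of the Lorentz force at the identity. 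Boundedness and skew-symmetry of $Y_e$ follow from strongness of $\G_e$ together with smoothness of the bilinear form $\sigma_e$, via the Riesz representation in \eqref{e:Lorentz}. Picard--Lindel\"of on the Hilbert space $T_e G$ then yields a unique smooth short-time solution $u(t;u_0)$ depending smoothly on the initial datum $u_0 = T_x\mu^{x^{-1}}v$.

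Second, I would reconstruct $\gamma$ as $\gamma(t) = \mathrm{Evol}(u)(t)\cdot x$ in the sense of \cref{def:regular half Lie group}. Right multiplication $\mu^x$ is smooth in any right half-Lie group and $G^\ell$ is a subgroup of $G$, so $\mathrm{Evol}(u)(t)\cdot x\in G^\ell$ whenever $\mathrm{Evol}(u)(t)\in G^\ell$ and $x\in G^\ell$. The regularity preservation of $\mathrm{Evol}(u)(t)$ is precisely what the non-magnetic no-loss--no-gain result of \cite[§7]{Bauer_2025} establishes for evolutions generated by the Euler--Arnold equation; since $Y_e$ is bounded linear on $T_e G$ and in particular smooth as a self-map, the same argument applies verbatim to our perturbed ODE.

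Finally, smoothness of $(x,v)\mapsto\Phi_t^{\G,\sigma}(x,v)$ as a map $TG^\ell\to TG^\ell$ follows by composing the smooth dependence $u_0\mapsto u(t;u_0)$, the smooth dependence of the evolution operator on its driving curve, and the smooth right translation by elements of $G^\ell$. The main obstacle is to verify that the additional term $Y_e\, u$ does not spoil the regularity-preservation arguments of Bauer--Harms--Michor: this reduces to observing that $Y_e$ is bounded linear on $T_e G$ thanks to the strong Riemannian structure, so that the non-magnetic proof carries through with only a controlled lower-order linear perturbation added.
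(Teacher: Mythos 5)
Your route differs from the paper's. The paper never passes to the Lie algebra: it observes (via \Cref{lemm: Hamiltonian picture of magnetic geodesic flow}) that the magnetic geodesic flow is the flow of the Hamiltonian vector field of the kinetic energy with respect to the strong twisted symplectic form \( \omega_\sigma \), notes that this vector field on \( TG \) is smooth, \( G \)-right-invariant and second order (i.e.\ \( d\pi_G S = \mathrm{id}_{TG} \)), and then applies the general no-loss--no-gain theorem \cite[Thm.~7.5]{Bauer_2025} as a black box. That theorem is stated for \emph{arbitrary} smooth right-invariant second-order vector fields on \( TG \), so no re-examination of its proof is needed; one only checks its hypotheses for the magnetic spray.

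The gap in your argument sits exactly where you assert that the non-magnetic argument ``applies verbatim'' because \( Y_e \) is bounded linear on \( T_eG \). Two problems. First, regularity preservation of \( \mathrm{Evol}(u)(t) \) is \emph{not} what \cite{Bauer_2025} establishes for general driving curves \( u \): the evolution of an arbitrary curve in \( T_eG \) lands only in \( G \), not in \( G^\ell \) (for \( \mathrm{Diff}^{H^s}(M) \), the flow of a time-dependent \( H^s \) vector field is only an \( H^s \) diffeomorphism). Regularity is preserved only for solutions of the reduced equation, and the mechanism is the smoothness and right-invariance of the full second-order field on \( TG \) --- precisely what \cite[Thm.~7.5]{Bauer_2025} axiomatizes --- not a property of the evolution operator. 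Second, if one insists on arguing at the level of the ODE \( \dot u = -\mathrm{ad}^*_u u + Y_e u \) on \( T_eG \), persistence of an initial datum in \( T_eG^\ell \) would require at the very least that \( Y_e \) map \( T_eG^\ell \) into \( T_eG^\ell \); boundedness of \( Y_e \) on \( T_eG \), which is all that Riesz representation and strongness of \( \G_e \) provide, does not imply this (a bounded operator on \( H^s \) need not preserve \( H^{s+\ell} \)). So the claim that the Lorentz term is a ``controlled perturbation'' assumes the very regularity statement the proposition is meant to prove. The repair is the paper's strategy: do not extend the proof of the non-magnetic special case; verify the (purely \( TG \)-level) hypotheses of \cite[Thm.~7.5]{Bauer_2025} for the magnetic Hamiltonian vector field and invoke that theorem directly. (A minor additional point: your reduction uses the right logarithmic derivative, whereas \( \mathrm{Evol} \) in \Cref{def:regular half Lie group} is defined via left translation, so the reconstruction formula \( \gamma(t) = \mathrm{Evol}(u)(t)\cdot x \) needs the conventions aligned.)
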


\begin{proof}
By \Cref{lemm: Hamiltonian picture of magnetic geodesic flow}, 
the magnetic geodesic flow of \( (G, \mathcal{G}, \sigma) \) is precisely the Hamiltonian flow 
of the kinetic energy \( E(x,v) = \tfrac{1}{2}\,\mathcal{G}_x(v,v) \) 
with respect to the twisted symplectic structure \( \omega_\sigma \). 
Since \( \omega_\sigma \) is a strong symplectic form, this flow is locally well-posed by the Picard–Lindelöf theorem. 
As \( \sigma \) and \( \mathcal{G} \) are \( G \)-right-invariant, the same holds for \( E \) and \( \omega_\sigma \). 
Hence, this Hamiltonian flow satisfies the assumptions of~\cite[Thm.~7.5]{Bauer_2025}, 
from which the claim follows.
\end{proof}

As in the case of the geodesic flow, which gives rise to the exponential map, 
the magnetic geodesic flow of \( (G, \mathcal{G}, \sigma) \) gives rise to the 
\emph{magnetic exponential map} of the magnetic system \( (G, \mathcal{G}, \sigma) \), which is defined by
\begin{equation}\label{Magnetic exponential map}
    \exp^{\mathcal{G}, \sigma} \colon \mathcal{U} \subseteq TG \longrightarrow G, \quad
    (x,v) \longmapsto \gamma_{x,v}(1),
\end{equation}
where \( \gamma_{x,v} \) denotes the unique magnetic geodesic of \( (G, \mathcal{G}, \sigma) \) 
with initial condition \( (x,v) \).  Here, \( \mathcal{U} \subseteq TG \) is the maximal open set where \( \exp^{\mathcal{G}, \sigma} \) is defined.

By \Cref{prop: no gain no loose}, it follows directly th
\begin{cor}[No-loss–no-gain for \( \exp^{\mathcal{G}, \sigma} \)]
\label{cor: no-loss-no-gain for exp}
Let \( (G, \mathcal{G}, \sigma) \) be as in \Cref{prop: no gain no loose}, and let 
\( \mathcal{U} \subset TG \) be the maximal open neighborhood of the zero section 
on which \( \exp^{\mathcal{G}, \sigma} \) is smooth.  
Then, for every \( \ell \geq 1 \), the magnetic exponential map restricts to a smooth map
\[
\exp^{\mathcal{G}, \sigma} \colon \mathcal{U} \cap TG^\ell \longrightarrow G^\ell,
\]
that is, evolution along magnetic geodesics preserves regularity: 
there is no gain or loss of smoothness.%
\end{cor}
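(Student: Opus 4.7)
The proof I propose is short and entirely reductive: it passes the no-loss-no-gain from the magnetic flow (\Cref{prop: no gain no loose}) to its time-$1$ evaluation, which is exactly the magnetic exponential map composed with the base projection. The starting observation is that, by the very definition~\eqref{Magnetic exponential map},
\[
\exp^{\mathcal{G}, \sigma}(x, v) = \pi_G\bigl( \Phi_1^{\mathcal{G}, \sigma}(x, v) \bigr), \qquad \forall\, (x,v) \in \mathcal{U},
\]
where $\pi_G \colon TG \to G$ is the bundle projection, which restricts to a smooth map $TG^\ell \to G^\ell$ for all $\ell \geq 1$. Thus the entire statement reduces to showing that the time-$1$ magnetic flow restricts to a smooth map $\Phi_1^{\mathcal{G}, \sigma} \colon \mathcal{U} \cap TG^\ell \to TG^\ell$.

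The key input is \Cref{prop: no gain no loose}: there exists $\varepsilon > 0$ such that, on its domain, the flow $\Phi_t^{\mathcal{G}, \sigma}$ defines a smooth map from (the relevant open subset of) $TG^\ell$ into $TG^\ell$ for every $|t| < \varepsilon$. The plan to promote this to time $1$ is to invoke the semi-group property: pick $N \in \mathbb{N}$ with $1/N < \varepsilon$ and write
\[
\Phi_1^{\mathcal{G}, \sigma} = \underbrace{\Phi_{1/N}^{\mathcal{G}, \sigma} \circ \cdots \circ \Phi_{1/N}^{\mathcal{G}, \sigma}}_{N \text{ times}},
\]
an identity which is valid on $\mathcal{U}$ by the very definition of $\mathcal{U}$ as the maximal set on which $\exp^{\mathcal{G}, \sigma}$ is defined. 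Given $(x, v) \in \mathcal{U} \cap TG^\ell$, I would argue by induction on $k$ that $\Phi_{k/N}^{\mathcal{G}, \sigma}(x, v) \in TG^\ell$ and depends smoothly on $(x, v)$: the base case $k = 0$ is trivial, while the inductive step is a direct application of \Cref{prop: no gain no loose} together with the fact that the composition of two smooth maps between Banach manifolds is smooth. At $k = N$ this yields $\Phi_1^{\mathcal{G}, \sigma}(x, v) \in TG^\ell$ and smoothness of $\Phi_1^{\mathcal{G}, \sigma} \colon \mathcal{U} \cap TG^\ell \to TG^\ell$. Composing with $\pi_G$ gives the statement.

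The only subtlety — and the place where I would double-check the argument carefully — is that the iterated small-time flow genuinely stays in $TG^\ell$ and in the domain on which \Cref{prop: no gain no loose} guarantees smoothness. The first issue is immediate from the no-loss-no-gain of the flow itself (each $\Phi_{1/N}^{\mathcal{G}, \sigma}$ preserves $TG^\ell$), while the second is ensured by the hypothesis $(x,v) \in \mathcal{U}$, which implies that the full magnetic trajectory on $[0,1]$ exists, and hence each intermediate point lies in the domain of the flow where \Cref{prop: no gain no loose} applies. No separate analytic estimates or additional regularity arguments are needed beyond this.
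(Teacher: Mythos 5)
Your proof is correct and follows essentially the same route as the paper, which treats the corollary as a direct consequence of \Cref{prop: no gain no loose} via the identity \( \exp^{\mathcal{G},\sigma} = \pi_G \circ \Phi_1^{\mathcal{G},\sigma} \) on \( \mathcal{U} \). Your explicit iteration of the small-time flow through the semigroup property is a useful addition rather than a different approach: since \Cref{prop: no gain no loose} is stated only for \( t \) sufficiently small, the passage to time \( 1 \) does require either your argument or an appeal to the fact that the underlying result \cite[Thm.~7.5]{Bauer_2025} already holds on the maximal flow domain, a point the paper leaves implicit.
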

\section{The Hopf--Rinow theorem for magnetic geodesics on half Lie groups.}  \label{s: The Hopf--Rinow theorem for magnetic geodesics on half Lie groups.}
This section is devoted to proving the main theoretical result of this article, namely \Cref{IThm: HopfRinow half lie group magnetic int}, which we recall here for the convenience of the reader.
\begin{thm}\label{IThm: HopfRinow half lie group magnetic}
  Let $(G,\G,\s)$ be as in \Cref{prop: conjugated to Finsler flow_sec}. Then the following holds true
\begin{enumerate}
    \item \label{it: main thm a} For all energy levels \( \kappa > c(G,\G,\sigma) \), with \( c(G,\G,\sigma )\) as in~\eqref{e:maneuni_half}, the space \( (\hat G,  d_{\mathcal{F}^\kappa}) \) is a complete metric space, i.e., every \(  d_{\mathcal{F}^\kappa}\)-Cauchy sequence converges in \( \hat G \).
    
    \item \label{it: main thm b} The magnetic exponential map \( \exp^{\G,\sigma}_e : T_e G \to G \) is defined on all of \( T_e G \).
    
    \item \label{it: main thm c} The magnetic exponential map \( \exp^{\G,\sigma} : TG \to G \) is defined on all of \( TG \).
    
    \item \label{it: main thm d} The magnetic system \( (G, \G, \sigma) \) is magnetically geodesically complete, i.e., every magnetic geodesic is maximally defined on all of \( \mathbb{R} \).
\end{enumerate}
Assume in addition that \( \hat{G} \) is \( L^2 \)-regular and that for each \( x \in \hat G \), the sets
\[
A_{x} := \left\{ \xi \in L^2([0,1], T_e \hat G) : \mathrm{evol}(\xi) = x \right\} \subset L^2([0,1], T_e \hat G)
\]
are weakly closed. Then:
\begin{enumerate}
    \setcounter{enumi}{4} 
    \item \label{it: main thm e} For all energy levels \( \kappa > c(G, \G, \sigma) \) and all \( x, y \in G \), there exists a magnetic geodesic of \( (G, \G, \sigma) \) of energy \( \kappa \) 
that minimizes the action connecting \( x \) and \( y \).
\end{enumerate}
In addition, the geodesic completeness statements, items~\ref{it: main thm b}- \ref{it: main thm d}, also hold for the magnetic systems \( (G^\ell, \G, \sigma) \) for all $\ell\geq 1$ on the weak Riemannian manifolds \( (G^\ell, \G) \), where \( \G \), respectively \( \sigma \), denotes the restriction of the Riemannian metric and the magnetic two-form from \( G \) to \( G^\ell \). 

\end{thm}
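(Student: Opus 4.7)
My plan is to address the five items in turn, leveraging the bi-Lipschitz equivalence between $\mathcal{F}^\kappa$ and $\sqrt{\hat{\mathcal{G}}}$ from \Cref{prop: conjugated to Finsler flow_sec} together with the Bauer--Harms--Michor Hopf--Rinow theorem \cite[Thm.~7.7]{Bauer_2025} applied to $(\hat{G},\hat{\mathcal{G}})$.

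For item \ref{it: main thm a}, I would upgrade the pointwise bound of \Cref{prop: conjugated to Finsler flow_sec}\ref{it: 1 finsler flow conj} at $e$ to a global bound on curve lengths by using the $\hat{G}$-right-invariance of both $\mathcal{F}^\kappa$ and $\hat{\mathcal{G}}$, and infimizing over connecting curves. This yields
\[
C_2^{-1}\, d_{\hat{\mathcal{G}}}(\hat{x},\hat{y}) \;\leq\; d_{\mathcal{F}^\kappa}(\hat{x},\hat{y}) \;\leq\; C_1^{-1}\, d_{\hat{\mathcal{G}}}(\hat{x},\hat{y}),
\]
and the same bound with $\hat{x},\hat{y}$ swapped. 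Although Randers metrics are asymmetric, both directed distances are sandwiched by the symmetric $d_{\hat{\mathcal{G}}}$, which is complete by \cite[Thm.~7.7]{Bauer_2025}; completeness of $(\hat{G},d_{\mathcal{F}^\kappa})$ follows at once.

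Items \ref{it: main thm b}--\ref{it: main thm d} reduce to a single ODE-extension argument. Since magnetic geodesics have constant speed, any maximal magnetic geodesic $\gamma\colon(a,b)\to G$ with $b<\infty$ is Lipschitz for $d_{\mathcal{G}}$, so $\{\gamma(t_n)\}$ is $d_{\mathcal{G}}$-Cauchy for every $t_n\nearrow b$ and hence converges in $(G,d_{\mathcal{G}})$ by completeness. Local existence for the magnetic geodesic ODE (\Cref{lemm: Hamiltonian picture of magnetic geodesic flow}, applied to the smooth Hamiltonian vector field of $(E,\omega_\sigma)$ on the Hilbert manifold $TG$) then extends $\gamma$ past $b$, contradicting maximality. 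This yields \ref{it: main thm d}; items \ref{it: main thm b} and \ref{it: main thm c} follow since $\exp^{\mathcal{G},\sigma}_e$ and $\exp^{\mathcal{G},\sigma}$ are defined wherever the magnetic flow exists up to time~$1$. The same reasoning transfers verbatim to the weak Riemannian manifold $(G^\ell,\mathcal{G})$ thanks to \Cref{cor: no-loss-no-gain for exp}: magnetic geodesics starting in $TG^\ell$ stay in $G^\ell$, and completeness of the enveloping system provides the needed Cauchy convergence.

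For item \ref{it: main thm e}, I would lift $x,y$ to $\hat{e}$ and $\hat{z}\in\hat{G}$ using right-translation, and minimize the Finsler energy
\[
\mathbb{E}_{\mathcal{F}^\kappa}(\gamma) \;=\; \int_0^1 \bigl(\mathcal{F}^\kappa_{\gamma(t)}(\dot\gamma(t))\bigr)^2\,\mathrm{d}t
\]
over $H^1$ paths from $\hat{e}$ to $\hat{z}$. By $L^2$-regularity of $\hat{G}$ such paths are identified with $\xi=T\mu^{\gamma^{-1}}\dot\gamma\in L^2([0,1],T_e\hat{G})$ via $\mathrm{Evol}$, and right-invariance of $\mathcal{F}^\kappa$ converts the functional into
\[
\mathcal{J}(\xi) \;=\; \int_0^1 \bigl(\sqrt{2\kappa}\,|\xi(t)| - \hat{\alpha}_{\hat{e}}(\xi(t))\bigr)^2\,\mathrm{d}t.
\]
Choosing a primitive $\hat{\alpha}$ with $\tfrac{1}{2}\|\hat{\alpha}\|_\infty^2<\kappa$ (possible since $\kappa>c(G,\mathcal{G},\sigma)$), the pointwise bound from \Cref{prop: conjugated to Finsler flow_sec}\ref{it: 1 finsler flow conj} yields coercivity $\mathcal{J}(\xi)\geq C_1^2\|\xi\|_{L^2}^2$, so every minimizing sequence in $A_{\hat{z}}$ is $L^2$-bounded. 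By reflexivity of $L^2$ I would extract a weak limit $\xi_*$, and weak closedness of $A_{\hat{z}}$ forces $\xi_*\in A_{\hat{z}}$. Weak lower semicontinuity of $\mathcal{J}$ follows from the pointwise convexity of $v\mapsto(\mathcal{F}^\kappa_{\hat{e}}(v))^2$---which holds because $\mathcal{F}^\kappa_{\hat{e}}$ is a nonnegative convex function (the Randers structure)---via the classical Ioffe--De Giorgi theorem for convex integrands. Thus $\xi_*$ minimizes $\mathcal{J}$, and $\mathrm{Evol}(\xi_*)$ is a constant-$\mathcal{F}^\kappa$-speed curve. Reparametrizing it to have Riemannian speed $\sqrt{2\kappa}$ and invoking \Cref{prop: conjugated to Finsler flow_sec}\ref{it: 2 finsler flow conj} identifies its projection to $G$ with the desired action-minimizing magnetic geodesic of energy $\kappa$ from $x$ to $y$.

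The principal obstacle will be item \ref{it: main thm e}: verifying that the abstract $L^2$-minimizer $\xi_*$ produces a genuine (sufficiently regular) Finsler geodesic to which \Cref{prop: conjugated to Finsler flow_sec}\ref{it: 2 finsler flow conj} applies. In finite dimensions this is routine bootstrap from strong convexity of the Randers integrand, but in the half-Lie group setting one must combine strong convexity of $\mathcal{F}^\kappa$ with the regularity properties of $\mathrm{Evol}$ on $\hat{G}$, and carefully match the constant Finsler-speed property obtained from energy minimization with the constant-kinetic-energy condition $\frac{1}{2}|\dot\gamma|^2=\kappa$ needed to invoke \Cref{prop: conjugated to Finsler flow_sec}\ref{it: 2 finsler flow conj}.
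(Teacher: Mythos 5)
Your treatment of items \ref{it: main thm a} and \ref{it: main thm e} essentially coincides with the paper's proof: item \ref{it: main thm a} via the bi-Lipschitz comparison of \Cref{prop: conjugated to Finsler flow_sec}\ref{it: 1 finsler flow conj} with the complete distance $d_{\hat{\G}}$ from \cite[Thm.~7.7]{Bauer_2025}, and item \ref{it: main thm e} via exactly the same reduction ($L^2$-regularity plus right-invariance, coercivity, weak closedness of $A_{\hat y\hat x^{-1}}$, Eberlein--\v{S}mulyan, weak lower semicontinuity of the convex integrand) followed by projection to $G$. Your remark on the asymmetry of the Randers distance and your explicit choice of a primitive with $\tfrac12\Vert\hat\alpha\Vert_\infty^2<\kappa$ are welcome refinements that the paper leaves implicit.

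For items \ref{it: main thm b}--\ref{it: main thm d}, however, you take the classical ``metric completeness $\Rightarrow$ geodesic completeness'' extension argument, whereas the paper deliberately routes through the Hamiltonian description of \Cref{lemm: Hamiltonian picture of magnetic geodesic flow} and the right-invariance of $E$ and $\omega_\sigma$ (the second strategy of \cite[Thm.~7.7]{Bauer_2025}, following \cite[Lemma~6.2]{GayBalmazRatiu2015}). Your version has a genuine gap in infinite dimensions: from $d_{\G}$-convergence of $\gamma(t_n)$ you may conclude convergence of the base points, but \emph{not} of the lifted sequence $(\gamma(t_n),\dot\gamma(t_n))$ in $TG$, since the sphere $\{v:\vert v\vert=\sqrt{2\kappa}\}$ in a Hilbert space is not compact and bounded sequences of velocities need not converge. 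Consequently ``local existence extends $\gamma$ past $b$'' does not follow: the Picard--Lindelöf existence time is only locally uniform on $TG$, and without either a convergent subsequence of velocities or a uniform lower bound on the existence time along the non-compact set $\{(\gamma(t_n),\dot\gamma(t_n))\}$, the extension step fails. This is precisely the mechanism behind the known failures of Hopf--Rinow in infinite dimensions (\cite{HopfrinowfalseAktkin,Atkin97}). The repair is to use right-equivariance of the magnetic flow: translating the initial condition back to $T_e G$, the extension time depends only on $\vert\dot\gamma\vert_{\G}=\sqrt{2\kappa}$, which is conserved, so the flow can always be prolonged by a fixed amount --- but this is the paper's argument, not yours, and note that it makes no use of metric completeness at all (which is why the paper can assert \ref{it: main thm b}--\ref{it: main thm d} for arbitrary right-invariant $\sigma$ and for the weak Riemannian manifolds $(G^\ell,\G)$). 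Your reduction of the $G^\ell$-statements to the $G$-statements via \Cref{cor: no-loss-no-gain for exp} is correct and matches the paper.
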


\begin{proof}
 We begin with \ref{it: main thm a}. 
By \ref{it: 1 finsler flow conj} in \Cref{prop: conjugated to Finsler flow_sec}, 
the norm induced by \( \hat{\mathcal{G}} \) on \( \hat{G} \) is equivalent to the Finsler metric \( \mathcal{F}^\kappa \). 
Using the inequality established there, together with the fact that 
\( (\hat{G}, d_{\hat{\mathcal{G}}}) \) is, by \cite[Thm.~7.7~(a)]{Bauer_2025}, 
a complete metric space, concludes the proof. \medskip

\noindent For items~\ref{it: main thm b}–\ref{it: main thm d}, we adapt the second proof strategy in
\cite[Thm.~7.7]{Bauer_2025} for the corresponding statements (b)–(d) therein.
More precisely, we follow an argument similar to \cite[Lemma~6.2]{GayBalmazRatiu2015},
using the description of the magnetic geodesic flow as a Hamiltonian flow on the tangent
bundle (see \Cref{lemm: Hamiltonian picture of magnetic geodesic flow}). \medskip

\noindent We close by giving a proof of item~\ref{it: main thm e}. 
By \ref{it: 2 finsler flow conj} in \Cref{prop: conjugated to Finsler flow_sec}, 
the lift of the magnetic geodesic flow of \( (G, \mathcal{G}, \sigma) \) to the universal cover 
\( \hat{G} \) coincides on the energy surface \( \Sigma_\kappa \) with the Finsler geodesic flow 
of \( (\hat{G}, \mathcal{F}^\kappa) \) for all \( \kappa > c(G, \mathcal{G}, \sigma) \). 
Thus, it suffices to show that for all pairs of points 
\( \hat{x}, \hat{y} \in \hat{G} \) and all \( \kappa > c(G, \mathcal{G}, \sigma) \), 
there exists a Finsler geodesic of \( (\hat{G}, \hat{\mathcal{F}}^\kappa) \) connecting 
\( \hat{x} \) and \( \hat{y} \) in \( \hat{G} \), where 
\( \hat{\mathcal{F}}^\kappa \) is defined as in~\eqref{e:finsler_metric_sec}. 

By the discussion following~\eqref{eq: def Finsler energy}, 
the geodesic distance of \( (\hat{G}, \hat{\mathcal{F}}^\kappa) \) between 
\( \hat{x} \) and \( \hat{y} \) can be computed by minimizing the Finsler energy functional of \( \mathcal{F}^\kappa \)
\begin{equation}\label{eq: finsler energy}
    \mathbb{E}(\gamma) 
    = \int_0^1 
        \big( \hat{\mathcal{F}}^\kappa_{\gamma(t)}(\dot{\gamma}(t)) \big)^2 
        \, \mathrm{d}t,
\end{equation}
over all paths \( \gamma \in H^1([0,1], \hat{G}) \) such that 
\( \gamma(0) = \hat{x} \) and \( \gamma(1) = \hat{y} \). 

Now, by the \( L^2 \)-regularity of \( \hat{G} \) and the right-invariance of \( \mathcal{F}^\kappa \), 
the problem of finding a minimizer of~\eqref{eq: finsler energy} 
is equivalent to finding a minimizer of the functional
\begin{equation}\label{eq: finsler functional restricted to subset A}
    A_{\hat{y}\hat{x}^{-1}} \ni \xi \longmapsto 
    \int_0^1 
        \mathcal{F}^\kappa(e, \xi(t))^2 
        \, \mathrm{d}t 
    \; =: \; \mathbb{E}(\xi),
\end{equation} 
where \( A_{\hat{y}\hat{x}^{-1}} := \{\, \xi \in L^2([0,1], T_e\hat{G}) \mid \mathrm{evol}(\xi) = \hat{y}\hat{x}^{-1} \,\} \).

Note that, by \ref{it: 1 finsler flow conj} in 
\Cref{prop: conjugated to Finsler flow_sec}, 
\( \mathcal{F}^\kappa \) and \( \Vert \cdot \Vert \) define the same topology on \( T_e G \); 
the only difference is that \( (T_e G, \mathcal{F}^\kappa) \) is no longer a Hilbert space, 
but merely a Banach space. 
Hence, the (weak) \( L^2 \)-topology on \( L^2([0,1], T_{\hat{e}} \hat{G}) \) 
can be equivalently defined using \( \mathcal{F}^\kappa \).

The functional \( \mathbb{E} \) is sequentially weakly lower semicontinuous, 
since for any Banach space \( (X, \Vert \cdot \Vert_X) \) and any \( p > 0 \), 
the map \( x \mapsto \Vert x \Vert_X^p \) is weakly lower semicontinuous. 
Moreover, \( \mathcal{E} \) is coercive, i.e.
\begin{equation*}
    \int_0^1 \mathcal{F}^\kappa(e, \xi(t))^2 \, \mathrm{d}t \;\to\; \infty
    \quad \text{as } \Vert \xi \Vert_{L^2} \to \infty.
\end{equation*}
This immediately implies that the infimum of \( \mathbb{E} \) 
as in~\eqref{eq: finsler functional restricted to subset A} 
over \( A_{\hat{y}\hat{x}^{-1}} \) is finite. \\
Let \( (\xi_n) \subset A_{\hat{y}\hat{x}^{-1}} \) be a minimizing sequence, i.e\ 
\[
\left\vert\inf_{\xi \in A_{\hat{y}\hat{x}^{-1}}} 
    \Vert \xi \Vert_{L^2([0,1], T_e \hat{G})}^2
    - \Vert \xi_n \Vert_{L^2([0,1], T_e \hat{G})}^2
    \right\vert< \frac{1}{n}
    \quad \forall\, n \in \mathbb{N}.
\]
By coercivity of the Finsler energy Functional $\mathbb{E}$, the sequence \( (\xi_n) \) is uniformly bounded in \( L^2([0,1], T_e \hat{G}) \). 
Using the assumption that the set 
\[
A_x := \{\, \xi \in L^2([0,1], T_e G) \mid \mathrm{evol}(\xi) = x \,\}
\]
is weakly closed for all \( x \in G \), 
it follows that \( A_{\hat{x}} \) is weakly closed for all \( \hat{x} \in \hat{G} \). 
Hence, by the Eberlein--\v{S}mulyan theorem, 
we may assume (after passing to a subsequence) that \( \xi_n \rightharpoonup \xi \) weakly in \( L^2 \), 
for some \( \xi \in A_{\hat{y}\hat{x}^{-1}} \). 
Finally, by sequential weak lower semicontinuity of \( \mathbb{E} \), we obtain
\[
\mathbb{E}(\xi) 
\le \liminf_{n \to \infty} \mathbb{E}(\xi_n)
= \inf_{\xi \in A_{\hat{y}\hat{x}^{-1}}} \mathbb{E}(\xi),
\]
which shows that \( \xi \) is a minimizer.

Hence, \( \gc := \mathrm{Evol}(\xi) \) is an Finsler energy-minimizing path connecting \( \hat x \) and \( \hat y\).  By the proof of \ref{it: 2 finsler flow conj} in \Cref{prop: conjugated to Finsler flow_sec} it is an magnetic geodesic of $(\hat{G},\hat{\G}, \hat\s)$ of energy $\k$ connecting $\hat{x}$ and $\hat{y}$, here the exact magnetic system $(\hat{G},\hat{\G}, \hat{\s})$ is the lift of the magnetic system $(G,\G,\s)$ to the universal cover $\pi:\hat G\to G$.

Let \( \hat{x}, \hat{y} \) be points in the fibers 
\( \pi^{-1}(x) \) and \( \pi^{-1}(y) \), respectively. 
By the above argument, \( \hat{x} \) and \( \hat{y} \) 
can be connected by a magnetic geodesic \( \gamma \) 
of \( (\hat{G}, \hat{\mathcal{G}}, \hat{\sigma}) \) with energy \( \kappa \). 
Since, by definition, the projection 
\( \pi \colon \hat{G} \to G \) maps magnetic geodesics of 
\( (\hat{G}, \hat{\mathcal{G}}, \hat{\sigma}) \) with energy \( \kappa \) 
to magnetic geodesics of \( (G, \mathcal{G}, \sigma) \) with the same energy, 
it follows that the projected curve \( \pi(\gamma) \) 
is a magnetic geodesic of \( (G, \mathcal{G}, \sigma) \) with energy \( \kappa \) 
connecting \( x \) and \( y \). 
This completes the proof of~\ref{it: main thm e int}.

Moreover, magnetic geodesic completeness for the magnetic systems 
\( (G^\ell, \mathcal{G}, \sigma) \) 
follows directly from the no-loss–no-gain result 
(\Cref{cor: no-loss-no-gain for exp}).
\end{proof}

\appendix
\section{The existence of a Chern connection}\label{appendix:chern_connection}

This appendix is devoted to proving the existence of a Chern connection for the Finsler manifold \( (\hat{G}, \mathcal{F}^\kappa) \), as stated in \Cref{prop: conjugated to Finsler flow_sec}.  
In what follows, we use the explicit form of \( \mathcal{F}_{\kappa} \) to show that, in this specific situation, the argument from the finite-dimensional case carries over.

We have already seen that the function \( \mathcal{F}^\kappa \) in~\eqref{e:finsler_metric_sec}
defines a \( C^\infty \) Finsler structure on \( TM \setminus \{0\} \) that is \emph{strongly convex}.  
That is, for each \( (x,v) \in TM \setminus \{0\} \), the fundamental tensor
\[
g^{\mathcal{F}^\kappa}_{(x,v)}(u,w)
:= \frac{1}{2}\,\partial_v^2\big(\mathcal{F}^\kappa(x,v)^2\big)[u,w]
\]
is given, by a standard computation analogous to the finite-dimensional case, by
\begin{align*}
g^{\mathcal{F}^\kappa}_{(x,v)}(u,w)
&= 2\kappa\,\frac{g_x(u,w)}{\Vert v \Vert_{g}}
   - \frac{\sqrt{2\kappa}}{\Vert v \Vert_{g}}
     \big( g_x(v,u)\,\alpha_x(w) + g_x(v,w)\,\alpha_x(u) \big) \\
&\quad + \frac{\alpha_x(v)\,\sqrt{2\kappa}}{\Vert v \Vert_{g}^3}\, g_x(v,u)\,g_x(v,w),
\end{align*}
which defines a continuous, symmetric, positive-definite bilinear form on \( T_xM \).  
It depends \( C^\infty \)-smoothly on \( (x,v) \) and induces a topological isomorphism
\[
g^{\mathcal{F}^\kappa\,\vee}_{(x,v)} : T_xM \to T_x^*M.
\]
This tensor is symmetric and positive definite provided that
\( \Vert \alpha \Vert_{\infty} < \sqrt{2\kappa} \),
which guarantees the strong convexity of \( \mathcal{F}^\kappa \).

Consequently, the \emph{Legendre transform} of the Lagrangian \( \tfrac12(\mathcal{F}^\kappa)^2 \) is
\[
\mathcal{L}(x,v)
:= \Big(x,\, \partial_v\big(\tfrac{1}{2}(\mathcal{F}^\kappa)^2\big)(x,v)\Big)
= (x,\, g^{\mathcal{F}^\kappa}_{(x,v)}(v,\cdot)),
\]
which is a \( C^\infty \) Banach bundle diffeomorphism from \( TM \setminus \{0\} \) onto
\( T^*M \setminus \{0\} \).  
By \cite[§7.3]{MardsenRatiuMechanics}, the Euler–Lagrange equations associated with the regular Lagrangian
\( \tfrac12(\mathcal{F}^\kappa)^2 \)
define a smooth second-order vector field (the \emph{canonical spray}) on \( TM \setminus \{0\} \).
It is well known that any such spray induces a nonlinear connection on \( TM \setminus \{0\} \)
from which the \emph{Chern (Chern–Rund) connection} is canonically obtained.
This connection is the unique torsion-free linear connection on the pullback bundle
\( \pi^*TM \to TM \setminus \{0\} \) that is almost \( g^{\mathcal{F}^\kappa} \)-compatible.

\bibliographystyle{abbrv}
	\bibliography{ref}
\end{document}